\newtheorem{theorem}{Theorem}[section]
\newtheorem{lemma}[theorem]{Lemma}
\newtheorem{corollary}[theorem]{Corollary}
\newtheorem{proposition}[theorem]{Proposition}
\theoremstyle{definition}
\theoremstyle{remark}
\newtheorem{remark}[theorem]{Remark}
\numberwithin{equation}{section}
\begin{document}

\title[$m$-multiple power moments of Kloosterman
           sums]{$\begin{array}{c}
                   \text{Recursive formulas generating power moments of} \\
                   \text{multi-dimensional Kloosterman sums and} \\
                   \text{$m$-multiple power moments of Kloosterman sums}
                 \end{array}$
           }

\author{dae san kim}
\address{Department of Mathematics, Sogang University, Seoul 121-742, Korea}
\curraddr{Department of Mathematics, Sogang University, Seoul
121-742, Korea} \email{dskim@sogong.ac.kr}
\thanks{This work was supported by National Research Foundation of Korea Grant funded by
the Korean Government 2009-0072514.}

\subjclass[2000]{}

\date{}

\dedicatory{ }

\keywords{}

\begin{abstract}
In this paper, we construct two binary linear codes associated with
multi-dimensional and $m -$multiple power Kloosterman sums (for any
fixed $m$) over the finite field $\mathbb{F}_{q}$. Here $q$ is a
power of two. The former codes are dual to a subcode of the binary
hyper-Kloosterman code. Then we obtain two recursive formulas for
the power moments of multi-dimensional Kloosterman sums and for the
$m$-multiple power moments of Kloosterman sums in terms of the
frequencies of weights in the respective codes. This is done via
Pless power moment identity and yields, in the case of power moments
of multi-dimensional Kloosterman sums, much simpler recursive
formulas than those associated with finite special linear groups
obtained previously.\\

Index terms-recursive formula, multi-dimensional Kloosterman sum,
Kloosterman sum, Pless power moment identity, weight distribution.\\

MSC 2000: 11T23, 20G40, 94B05.
\end{abstract}

\maketitle
\section{Introduction and Notations}

Let $\psi$ be a nontrivial additive character of the finite field
$\mathbb{F}_q$ with $q=p^r$ elements ($p$ a prime), and let $m$ be a
positive integer. Then the $m$-dimensional Kloosterman sum $K_m(\psi
; a)$(\cite{LN1}) is defined by

\begin{align*}K_m(\psi ; a)=\sum_{\alpha_1 , \cdots ~, \alpha_m \in
\mathbb{F}_q^*} \psi(\alpha_1 + \cdots + \alpha_m + a \alpha_1^{-1}
\cdots  \alpha_m^{-1})~(a \in \mathbb{F}_q^*).
\end{align*}

For this, we have the Deligne bound

\begin{align}\label{a}|K_m(\psi ; a)|\leq (m+1)q^{\frac{m}{2}}.
\end{align}

In particular, if $m=1$, then $K_1(\psi ; a)$ is simply denoted by
$K(\psi ; a)$, and is called the Kloosterman sum. The Kloosterman
sum was introduced in 1926 \cite{K1} to give an estimate for the
Fourier coefficients of modular forms. It  has also been studied to
solve various problems in coding theory and cryptography over finite
fields of characteristic two.

For each nonnegative integer $h$, by $MK_m(\psi)^h$ we will denote
the $h$-th moment of the $m$-dimensional Kloosterman sum $K_m(\psi ;
a)$. Namely, it is given by

\begin{align*}
MK_m(\psi)^h=\sum_{a \in \mathbb{F}_q^*}K_m(\psi;a)^h.
\end{align*}

If $\psi=\lambda$ is the canonical additive character of
$\mathbb{F}_q$, then $MK_m(\lambda)^h$ will be simply denoted by
$MK_m^h$. If futher $m=1$, for brevity $MK_1^h$ will be indicated by
$MK^h$. The power moments of Kloosterman sums can be used, for
example, to give an estimate for the Kloosterman sums.

Explicit computations on power moments of Kloosterman sums were
begun with the paper \cite{S1} of Sali$\acute{e}$ in 1931, where he
showed, for any odd prime $q$,

\begin{align*}
MK^h=q^2 M_{h-1}-(q-1)^{h-1}+2 (-1)^{h-1} ~(h\geq1).
\end{align*}
Here $M_0=0$, and, for $h \in \mathbb{Z}_{>0}$,
\begin{align*}
M_h=|\{(\alpha_1,\cdots,\alpha_h)\in
(\mathbb{F}_q^*)^h|\sum_{j=1}^{h}\alpha_i=1=\sum_{j=1}^h
\alpha_i^{-1}\}|.
\end{align*}

For $q=p$ odd prime, Sali$\acute{e}$ obtained $MK^1$, $MK^2$,
$MK^3$, $MK^4$ in that same paper by determining $M_1$, $M_2$,
$M_3$. On the other hand, $MK^5$ can be expressed in terms of the
$p$-th eigenvalue for a weight $3$ newform on $\Gamma_0(15)$ (cf.
\cite{L1}, \cite{CM1}). $MK^6$ can be expressed in terms of the
$p$-th eigenvalue for a weight $4$ newform on $\Gamma_0(6)$
(cf.\cite{KJ1}). Also, based on numerical evidence, in \cite{E1}
Evans was led to propose a conjecture which expresses $MK^7$ in
terms of Hecke eigenvalues for a weight $3$ newform on
$\Gamma_0(525)$ with quartic nebentypus of conductor 105.

From now on, let us assume that $q=2^r$. Carlitz \cite{C1} evaluated
$MK^h$ for $h \leq 4$. Recently, Moisio was able to find explicit
expressions of $MK^h$, for $h\leq10$ (cf. \cite{M1}). This was done,
via Pless power moment identity, by connecting moments of
Kloosterman sums and the frequencies of weights in the binary
Zetterberg code of length $q+1$, which were known by the work of
Schoof and Vlugt in \cite{SM1}.

Also, Moisio considered binary hyper-Kloosterman codes
  ${C}(r,m)$ and determined the weight distributions of ${C}(r,m)$ and
  ${C}^{\bot}(r,m)$, for $r=2$ and all $m\geq2$, and for all $r\geq2$ and
  $m=3$ (cf. \cite{M2}). In \cite{MR1}, these results were further extended to the case of
  $r=3, 4$ and all $m\geq2$.

In this paper, along the line of \cite{D2} we construct two binary
linear codes ${C}_{n-1}$ and ${D}_m$, respectively connected with
multi-dimensional and $m$-multiple power Kloosterman sums (for any
fixed $m$) over the finite field $\mathbb{F}_q$. Here $q$ is a power
of two. The code ${C}_{n-1}^{\bot}$ is a subcode of the
hyper-Kloosterman code ${C}(r,n)$, which is mentioned above. Then we
obtain two recursive formulas for the power moments of
multi-dimensional Kloosterman sums and the $m$-multiple power
moments of Kloosterman sums in terms of the frequencies of weights
in the respective codes. This is done via Pless power moment
identity and yields, in the case of power moments of
multi-dimensional Kloosterman sums, much simpler recursive formulas
than those obtained previously in \cite{D1}.

\begin{theorem}\label{A}
$(1)$ Let $n=2^s$, $q=2^r$. For $r\geq3$, and $h=1,2,\cdots,$

\begin{align}\label{b}
\begin{split}&MK_{n-1}^{h}=\sum_{l=0}^{h-1}(-1)^{h+l+1}{\binom{h}{l}}
(q-1)^{(n-1)(h-l)}MK_{n-1}^{l}\\
            &\qquad +q\sum_{j=0}^{min\{(q-1)^{n-1},~h\}}(-1)^{h+j}
            C_{n-1,j}\sum_{t=j}^{h}t!S(h,t)2^{h-t}{\binom{(q-1)^{n-1}-j}
            {(q-1)^{n-1}-t}}.
\end{split}
\end{align}
Here $S(h,t)$ indicates the Stirling number of the second kind given
by

\begin{align}\label{c}
S(h,t)=\frac{1}{t!}\sum_{j=0}^{t}(-1)^{t-j}{\binom{t}{j}}j^h.
\end{align}
In addition, $\{C_{n-1,j}\}_{j=0}^{(q-1)^{n-1}}$ denotes  the weight
distribution of the binary linear code $C_{n-1}$, given by

\begin{align}\label{d}
C_{n-1,j}=\sum{\prod_{\beta\in\mathbb{F}_{q}}}{\binom{\delta(n-1,q;\beta)}
{\nu_{\beta}}},
\end{align}
where the sum runs over all the sets of integers
$\{\nu_{\beta}\}_{\beta\in\mathbb{F}_{q}}(0\leq\nu_{\beta}\leq
\delta(n-1,q;\beta))$ satisfying

\begin{equation}\label{e}
\sum_{\beta\in\mathbb{F}_{q}}{\nu_{\beta}}=j, ~and~
\sum_{\beta\in\mathbb{F}_{q}}{\nu_{\beta}}{\beta}=0, ~and\\
\end{equation}
\begin{align*}
\begin{split}\nonumber
\delta(n-1,q;
\beta)&=|\{(\alpha_{1},\cdots,\alpha_{n-1})\in(\mathbb{F}_{q}^{*})^{n-1}|\\
&\qquad\qquad\qquad\alpha_{1}+\cdots+\alpha_{n-1}+\alpha_{1}^{-1}\cdots
\alpha_{n-1}^{-1}=\beta\}|\\
&=
\begin{cases}
q^{-1}\{(q-1)^{n-1}+1\},& \text {if $\beta=0$,}\\
K_{n-2}(\lambda;\beta^{-1})+q^{-1}\{(q-1)^{n-1}+1\},& \text {if
$\beta\in\mathbb{F}_{q}^{*}$}.
\end{cases}
\end{split}
\end{align*}
Here we understand that
$K_{0}(\lambda;\beta^{-1})=\lambda(\beta^{-1})$.\\

(2) Let $q=2^{r}$. For $r\geq3$, and $m,h=1,2,\cdots,$
\begin{align}\label{f}
\begin{split}&MK_{}^{mh}=\sum_{l=0}^{h-1}(-1)^{h+l+1}{\binom{h}{l}}
(q-1)^{m(h-l)}MK^{ml}\\
            &\qquad\quad
            +q\sum_{j=0}^{min\{(q-1)^{m},h\}}(-1)^{h+j}D_{m,j}\sum_{t=j}^{h}
            t!S(h,t)2^{h-t}{\binom{(q-1)^{m}-j}{(q-1)^{m}-t}}.
\end{split}
\end{align}
Here $\{D_{m,j}\}_{j=0}^{(q-1)^{m}}$ is the weight distribution of
the binary linear code $D_{m}$, given by

\begin{align}\label{f2}
D_{m,j}=\sum\prod_{\beta\in\mathbb{F}_{q}}{\binom{\sigma(m,q;\beta)}{\nu_{\beta}}},
\end{align}
where the sum runs over all the sets of integers
$\{\nu_{\beta}\}_{\beta\in\mathbb{F}_{q}}(0\leq\nu_{\beta}\leq\sigma(m,q;\beta))$
satisfying (\ref{e}), and

\begin{align}\label{g}
\begin{split}
\sigma(m,q;\beta)&=|\{(\alpha_{1},\cdots,\alpha_{m})\in(\mathbb{F}_{q}^{*})^{m}|\\
&\qquad\qquad\qquad\qquad\alpha_{1}^{}+\cdots+\alpha_{m}^{}+\alpha_{1}^{-1}+\cdots
+\alpha_{m}^{-1}=\beta\}|\\
&=\sum\lambda(\alpha_{1}^{}+\cdots+\alpha_{m}^{})+q^{-1}\{(q-1)^{m}+(-1)^{m+1}\},
\end{split}
\end{align}
with the sum running over all
$\alpha_{1}^{},\cdots,\alpha_{m}^{}\in\mathbb{F}_{q}^{*}$,
satisfying $\alpha_{1}^{-1}+\cdots+\alpha_{m}^{-1}=\beta$.\\

\end{theorem}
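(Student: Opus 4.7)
The plan is to realize both sides of the Pless power moment identity for two specially constructed binary linear codes and then read off the recursions. For part (1) I would take $C_{n-1}^{\perp}$ to be the $\mathbb{F}_2$-linear span of the $q$ vectors
\[
c_a=(\mathrm{Tr}(af(\alpha)))_{\alpha\in(\mathbb{F}_q^*)^{n-1}},\qquad a\in\mathbb{F}_q,
\]
where $\mathrm{Tr}:\mathbb{F}_q\to\mathbb{F}_2$ is the absolute trace and $f(\alpha)=\alpha_1+\cdots+\alpha_{n-1}+\alpha_1^{-1}\cdots\alpha_{n-1}^{-1}$; dually, $C_{n-1}$ is the set of $(x_\alpha)\in\mathbb{F}_2^{(q-1)^{n-1}}$ satisfying the single $\mathbb{F}_q$-linear parity check $\sum_\alpha x_\alpha f(\alpha)=0$. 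Part (2) is constructed identically from $g(\alpha)=\alpha_1+\cdots+\alpha_m+\alpha_1^{-1}+\cdots+\alpha_m^{-1}$ on $(\mathbb{F}_q^*)^m$, producing $D_m$ and $D_m^{\perp}$; the hypothesis $r\ge 3$ will be used to verify that $a\mapsto c_a$ is $\mathbb{F}_2$-injective, so that $\dim C_{n-1}^\perp=\dim D_m^\perp=r$.

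Two computations feed into the identity. First, for $a\in\mathbb{F}_q^*$ the weight of a typical dual codeword of $C_{n-1}^\perp$ is $w(c_a)=\tfrac12((q-1)^{n-1}-\sum_\alpha\lambda(af(\alpha)))$; the substitution $\beta_i=a\alpha_i$ collapses the character sum to $K_{n-1}(\lambda;a^n)$, and because $n=2^s$ the map $a\mapsto a^n$ is a bijection of $\mathbb{F}_q^*$, so $\sum_{a\in\mathbb{F}_q^*}w(c_a)^h$ expands via the binomial theorem into a linear combination of the moments $MK_{n-1}^l$, $0\le l\le h$. The analogous calculation for $D_m^{\perp}$, after factoring $\sum_\alpha\lambda(ag(\alpha))=K(\lambda;a^2)^m=K(\lambda;a)^m$ via the characteristic-two identity $K(\lambda;a^2)=K(\lambda;a)$, yields $w(c_a)=\tfrac12((q-1)^m-K(\lambda;a)^m)$ and expresses $\sum_a w(c_a)^h$ in terms of $MK^{ml}$. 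Second, the weight distribution of $C_{n-1}$ is enumerated by partitioning the support of a codeword according to $\beta=f(\alpha)$, which gives the multinomial count (\ref{d})--(\ref{e}) with fibre size $\delta(n-1,q;\beta)$; character orthogonality expresses $\delta$ as $q^{-1}(q-1)^{n-1}+q^{-1}\sum_{t\ne 0}\lambda(-t\beta)K_{n-1}(\lambda;t^n)$. The Fourier identity $\sum_{t\ne 0}\lambda(-ct)K_{n-1}(\lambda;t)=qK_{n-2}(\lambda;c^{-1})-(-1)^{n-1}$, combined with the Frobenius relation $K_{n-2}(\lambda;\beta^{-n})=K_{n-2}(\lambda;\beta^{-1})$ for $\beta\ne 0$ and the evaluation $\sum_{a\ne 0}K_{n-1}(\lambda;a)=(-1)^n$ for $\beta=0$, then produces the stated closed form. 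An entirely parallel calculation yields $\sigma(m,q;\beta)$ and, via a swap of the order of summation, the secondary expression (\ref{g}).

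With both inputs in hand, the Pless power moment identity applied to $C_{n-1}$ equates $\sum_j j^h B_j$ (which by the first computation is a binomial combination of $MK_{n-1}^l$) with a Stirling-weighted convolution of $\{C_{n-1,j}\}$; multiplying through by $2^h$ and absorbing the factors $2^{r-t}$ produces $q\cdot 2^{h-t}$ as in (\ref{b}), while isolating the $l=h$ term on the left and shifting the remaining $l<h$ terms to the right, after careful sign-tracking, yields exactly the recursion (\ref{b}). Repeating the argument verbatim with $D_m$ in place of $C_{n-1}$ gives (\ref{f}). I expect the principal technical obstacle to be the closed-form evaluations of $\delta(n-1,q;\beta)$ and $\sigma(m,q;\beta)$: the chain of identities simultaneously exploits the bijectivity of $a\mapsto a^{2^s}$, the Frobenius invariance $\lambda(x^2)=\lambda(x)$ and its consequence $K(\lambda;a^2)=K(\lambda;a)$, the recursive Fourier identity for hyper-Kloosterman sums, and the elementary moment $\sum_{a\ne 0}K_m(\lambda;a)=(-1)^{m+1}$; by comparison, the subsequent Pless manipulation and sign-tracking are routine but must be executed carefully to recover the precise combinatorial form stated.
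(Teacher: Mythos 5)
Your proposal follows essentially the same route as the paper: the same codes $C_{n-1}$ and $D_m$ with their trace-described duals, the same weight formula $w(c(a))=\tfrac{1}{2}\bigl(N_{1}-K_{n-1}(\lambda;a)\bigr)$ (resp. $w(d(a))=\tfrac{1}{2}\bigl(N_{2}-K(\lambda;a)^{m}\bigr)$) obtained from the bijection $a\mapsto a^{2^{s}}$ and the Frobenius invariance $\lambda(x^{2})=\lambda(x)$, the same multinomial count of the weight distribution fibered over the values $\beta$, the same use of the Deligne/Weil bound to show the dual code has dimension $r$ when $r\geq 3$, and the same application of the Pless power moment identity followed by isolating the $l=h$ term. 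The only substantive difference is that you rederive the closed forms of $\delta(n-1,q;\beta)$ and $\sigma(m,q;\beta)$ from scratch via orthogonality and the Fourier recursion for hyper-Kloosterman sums (a correct derivation), whereas the paper simply imports these from its reference \cite{D1}.
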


(1) and (2) of the following are respectively $n=2$ and $n=4$ cases
of Theorem \ref{A} (1) (cf. (\ref{x}), (\ref{y})), and (3) and (4)
are equivalent and $n=2$ case of Theorem \ref{A} (2) ((cf.
(\ref{l1}), (\ref{q1})).

\begin{corollary}\label{B}
$(1)$ Let $q=2^r$. For $r\geq3$, and $h=1,2,\cdots,$

\begin{align}\label{h}
\begin{split}
&MK_{}^{h}=\sum_{l=0}^{h-1}(-1)^{h+l+1}{\binom{h}{l}}(q-1)^{h-l}MK^{l}\\
            &\qquad\quad
            +q\sum_{j=0}^{min\{(q-1)^{},h\}}(-1)^{h+j}C_{1,j}\sum_{t=j}^{h}
            t!S(h,t)2^{h-t}{\binom{{q-1}^{}-j}{{q-1}^{}-t}},
\end{split}
\end{align}
where $\{{C}_{1,j}\}_{j=0}^{q-1 }$ is the weight distribution of the
binary linear code ${C}_{1}$, with

\begin{align*}
{C}_{1,j}=\sum{\binom{1}{\nu_{0}}}\prod_{tr(\beta^{-1})=0}{\binom{2}{\nu_{\beta}}}
~(j=0,\cdots,N_{1}).
\end{align*}
Here the sum is over all the sets of nonnegative integers
$\{\nu_{0}\}\bigcup\{\nu_{\beta}\}_{tr(\beta^{-1})=0}$ satisfying
$\nu_{0}+\sum_{tr(\beta^{-1})=0}\nu_{\beta}=j$ and
$\sum_{tr(\beta^{-1})=0}\nu_{\beta}\beta=0$.\\

$(2)$ Let $q=2^{r}$. For $r\geq3$, and $h=1,2,\cdots,$
\begin{align}\label{i}
\begin{split}
&MK_{3}^{h}=\sum_{l=0}^{h-1}(-1)^{h+l+1}{\binom{h}{l}}(q-1)^{3(h-l)}MK_{3}^{l}\\
            &\qquad\quad
            +q\sum_{j=0}^{min\{(q-1)^{3},h\}}(-1)^{h+j}C_{3,j}\sum_{t=j}^{h}
            t!S(h,t)2^{h-t}{\binom{(q-1)^{3}-j}{(q-1)^{3}-t}},
\end{split}
\end{align}
where $\{{C}_{3,j}\}_{j=0}^{(q-1)^{3} }$ is the weight distribution
of the binary linear code ${C}_{3}$, with

\begin{align*}
{C}_{3,j}=\sum{\binom{m_{0}}{\nu_{0}}}\prod_{\substack{|t|<2
\sqrt{q}\\t\equiv-1(4)}}
\prod_{K(\lambda;\beta^{-1})=t}{\binom{m_t}{\nu_{\beta}}}.
\end{align*}
Here the sum runs over all the sets of nonnegative integers
$\{\nu_{\beta}\}_{\beta\in\mathbb{F}_{q}}$ satisfying (\ref{e}),

\begin{align*}
m_{0}=q^{2}-3q+3,
\end{align*}
and

\begin{align*}
m_{t}=t^{2}+q^{2}-4q+3,
\end{align*}
for every integer $t$ satisfying $|t|<2\sqrt{q}$ and $t\equiv-1(4)$
.\\

$(3)$ Let $q=2^{r}$. For $r\geq3$, and $h=1,2,\cdots,$
\begin{align}\label{k}
\begin{split}
&MK_{}^{2h}=\sum_{l=0}^{h-1}(-1)^{h+l+1}{\binom{h}{l}}(q-1)^{2(h-l)}MK_{}^{2l}\\
            &\qquad\quad
            +q\sum_{j=0}^{min\{(q-1)^{2},h\}}(-1)^{h+j}D_{2,j}\sum_{t=j}^{h}
            t!S(h,t)2^{h-t}{\binom{(q-1)^{2}-j}{(q-1)^{2}-t}},
\end{split}
\end{align}
where $\{D_{2,j}\}_{j=0}^{(q-1)^{2}}$ is the weight distribution of
the binary linear code ${D}_{2}$, with

\begin{align}\label{l}
\begin{split}
&D_{2,j}^{}=\sum{\binom{2q-3}{\nu_{0}}}\prod_{\beta\in\mathbb{F}_{q}^{*}}
{\binom{K(\lambda;\beta^{-1})+q-3}{\nu_{\beta}}}\\
            &\qquad
            =\sum{\binom{2q-3}{\nu_{0}}}\prod_{\substack{|t|<2\sqrt{q}\\t\equiv-1(4)}}
            \prod_{K(\lambda;\beta^{-1})=t}{\binom{t+q-3}{\nu_{\beta}}},
            with
\end{split}
\end{align}
the sum running over all the sets of nonnegative integers
$\{\nu_{\beta}\}_{\beta\in\mathbb{F}_{q}}$ satisfying (\ref{e}).\\

$(4)$ Let $q=2^{r}$. For $r\geq3$, and $h=1,2,\cdots,$
\begin{align}\label{m}
\begin{split}
&MK_{2}^{h}=\sum_{l=0}^{h-1}(-1)^{h+l+1}{\binom{h}{l}}(q^{2}-3q+1)^{(h-l)}MK_{2}^{l}\\
            &\qquad\quad
            +q\sum_{j=0}^{min\{(q-1)^{2},h\}}(-1)^{h+j}D_{2,j}\sum_{t=j}^{h}t!S(h,t)
            2^{h-t}{\binom{(q-1)^{2}-j}{(q-1)^{2}-t}},
\end{split}
\end{align}
where $D_{2,j}(0\leq j \leq (q-1)^{2})$'s are just as in (\ref{l}).\\

\end{corollary}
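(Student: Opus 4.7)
The plan is to derive the formula (\ref{m}) from (\ref{k}), since (\ref{k}) is nothing but Theorem \ref{A}(2) specialized to $m=2$. The bridge between the two formulas is the pointwise identity
\begin{equation*}
K(\lambda;a)^2 = K_2(\lambda;a) + q, \qquad a \in \mathbb{F}_q^*,
\end{equation*}
valid for $q$ a power of two. Once this is in hand, the replacement of $MK^{2l}$ by $MK_2^l$ and of $(q-1)^{2(h-l)}$ by $(q^2-3q+1)^{h-l}$ in the recursion is pure binomial algebra.

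To establish the squaring identity, I would expand $K(\lambda;a)^2$ as a double sum and substitute $\alpha_2 = \alpha_1\gamma$, then $\delta = (1+\gamma)\alpha_1$. Using the characteristic-two simplification $(1+\gamma)^2/\gamma = \gamma+1/\gamma$, the contribution of $\gamma\neq 1$ collapses to $K(\lambda;a(\gamma+1/\gamma))$, and the involution $\gamma\leftrightarrow 1/\gamma$ (recording the two roots of $\gamma^2+u\gamma+1=0$ when $\mathrm{tr}(1/u)=0$) yields
\begin{equation*}
K(\lambda;a)^2 = (q-1) + 2\sum_{\substack{u\in\mathbb{F}_q^* \\ \mathrm{tr}(1/u)=0}} K(\lambda;au).
\end{equation*}
In parallel, parametrizing pairs in $K_2(\lambda;a)$ by $w=\alpha_1$ and $z=\alpha_1\alpha_2$ factorizes the sum as $K_2(\lambda;a) = \sum_{z\neq 0} \lambda(a/z) K(\lambda;z) = \sum_{u\neq 0} \lambda(1/u) K(\lambda;au)$. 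Subtracting, the coefficient of each $K(\lambda;au)$ becomes $2\cdot\mathbf{1}_{\mathrm{tr}(1/u)=0} - \lambda(1/u)$, which in characteristic two is identically $1$ because $\lambda(x) = (-1)^{\mathrm{tr}(x)}$. Hence $K(\lambda;a)^2 - K_2(\lambda;a) = (q-1) + \sum_{u\neq 0}K(\lambda;au) = (q-1) + MK = q$, using the classical evaluation $MK=1$.

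Summing the squaring identity over $a$ and applying the binomial theorem yields the moment relation $MK^{2l} = \sum_{i=0}^{l}\binom{l}{i} q^{l-i} MK_2^i$. Substituting this into (\ref{k}) leaves the $D_{2,j}$-term untouched, and collecting the coefficient of each $MK_2^i$ (using $\binom{h}{l}\binom{l}{i} = \binom{h}{i}\binom{h-i}{l-i}$, absorbing $MK^{2h}$ on the left-hand side as the $l=h$ term, and reindexing $m=l-i$) produces
\begin{equation*}
\binom{h}{i}(-1)^{h+i+1}\sum_{m=0}^{h-i}(-1)^{m}\binom{h-i}{m}(q-1)^{2(h-i-m)}q^{m} = (-1)^{h+i+1}\binom{h}{i}(q^2-3q+1)^{h-i}
\end{equation*}
by the binomial expansion of $\big((q-1)^2-q\big)^{h-i}$. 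This is precisely the coefficient appearing in (\ref{m}), completing the derivation.

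The main obstacle is the pointwise squaring identity itself. Its proof pivots on the characteristic-two coincidence $2\cdot\mathbf{1}_{\mathrm{tr}(x)=0} - \lambda(x) \equiv 1$, which is exactly what fuses the geometric decomposition of $K(\lambda;a)^2$ (via the involution $\gamma\leftrightarrow 1/\gamma$) with the multiplicative Fourier-type expansion of $K_2(\lambda;a)$. The subsequent binomial bookkeeping is routine once the algebraic identity $(q-1)^2 - q = q^2 - 3q + 1$ is spotted.
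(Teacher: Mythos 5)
Your argument addresses only part (4) of the corollary, and for that part it is correct but follows a different route from the paper. The paper does not derive (\ref{m}) from (\ref{k}) after the fact; instead it re-runs the Pless power moment identity on $D_2^{\bot}$ using the alternative weight expression $w(d(a))=\tfrac12\bigl(q^2-3q+1-K_2(\lambda;a)\bigr)$ of (\ref{r1}), which comes from Carlitz's identity $K_2(\lambda;a)=K(\lambda;a)^2-q$ (Theorem \ref{D}, simply cited from the literature). Your version injects that identity at the level of moments, $MK^{2l}=\sum_i\binom{l}{i}q^{l-i}MK_2^{i}$, into (\ref{k}) and resums using $\binom{h}{l}\binom{l}{i}=\binom{h}{i}\binom{h-i}{l-i}$ and $\bigl((q-1)^2-q\bigr)^{h-i}=(q^2-3q+1)^{h-i}$; I have checked this bookkeeping and it is sound --- in effect both arguments expand $\sum_{a}(2w(d(a)))^{h}$ in two ways. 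Your self-contained proof of $K(\lambda;a)^2=K_2(\lambda;a)+q$, via the substitution $\alpha_2=\alpha_1\gamma$, the two-to-one map $\gamma\mapsto\gamma+\gamma^{-1}$ onto $\{u\neq 0: tr(1/u)=0\}$, the factorization $K_2(\lambda;a)=\sum_{u\neq 0}\lambda(1/u)K(\lambda;au)$, and the characteristic-two identity $2\cdot\mathbf{1}_{tr(x)=0}-\lambda(x)=1$ together with $MK=1$, is a genuine addition that the paper does not supply.

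The gap is that parts (1), (2) and (3) are left unproved. Part (3) is indeed the $m=2$ specialization of Theorem \ref{A}(2), but the corollary also asserts the explicit closed form (\ref{l}) for $D_{2,j}$, and that requires evaluating $\sigma(2,q;0)=2q-3$ and $\sigma(2,q;\beta)=K(\lambda;\beta^{-1})+q-3$ for $\beta\neq0$ (which the paper gets from the exponential-sum evaluation $\sum_{\alpha\neq0,1}\lambda(\beta/(\alpha^2+\alpha))=K(\lambda;\beta)-1$), and then the Lachaud--Wolfmann description of the value set of the Kloosterman sum (Theorem \ref{C}) to reindex the product over $t\equiv-1\ (\mathrm{mod}\ 4)$, $|t|<2\sqrt{q}$. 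Likewise (1) and (2) require the evaluations of $\delta(1,q;\beta)$ and $\delta(3,q;\beta)$ in (\ref{p}) and (\ref{q}) (the latter again via $K_2=K^2-q$) to pass from the generic weight-distribution formula (\ref{d}) to the stated explicit formulas for $C_{1,j}$ and $C_{3,j}$. So as a proof of the corollary as stated the proposal is incomplete, even though the one step carrying genuinely new content --- converting the $MK^{2h}$ recursion into the $MK_2^{h}$ recursion with base $q^2-3q+1$ --- is handled correctly.
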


The next two theorems will be of use later.\\

\begin{theorem}\label{C}$($\cite{LW1}$)$
Let $q=2^{r}$, with $r \geq 2$. Then the range $R$ of
$K(\lambda;a)$, as $a$ varies over $\mathbb{F}_{q}^{*}$, is given by

\begin{align*}
R=\{t\in \mathbb{Z} ~|~|t|<2\sqrt{q},~t \equiv -1(mod 4)\}.
\end{align*}

In addition, each value $t\in R$ is attained exactly $H(t^{2}-q)$
times, where $H(d)$ is the Kronecker class number of $d$.\\

\end{theorem}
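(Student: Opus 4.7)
The plan is to follow the strategy of Lachaud and Wolfmann \cite{LW1}: realize each value $K(\lambda;a)$ as (the negative of) the Frobenius trace of an explicit elliptic curve in characteristic two, then invoke Deuring's classification of endomorphism rings to count parameters via a Hurwitz--Kronecker class number. Throughout, $q=2^r$ and $a \in \mathbb{F}_q^*$.

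First, I would attach to each $a$ the Artin--Schreier-type elliptic curve $E_a : y^2 + y = x^3 + ax$. A direct character-sum calculation, exploiting that the image of $y \mapsto y^2+y$ on $\mathbb{F}_q$ is the kernel of the absolute trace and that $\lambda(x) = (-1)^{\operatorname{Tr}(x)}$, gives
\begin{equation*}
\#E_a(\mathbb{F}_q) \;=\; q + 1 + K(\lambda; a),
\end{equation*}
so the Frobenius trace of $E_a$ equals $-K(\lambda;a)$. For $a\neq 0$ the curve $E_a$ is ordinary, so the Hasse bound yields the strict inequality $|K(\lambda;a)| < 2\sqrt{q}$. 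The congruence $K(\lambda;a) \equiv -1 \pmod{4}$ then follows from the fact that an ordinary elliptic curve in characteristic two has $\mathbb{F}_q$-rational $2$-torsion subgroup of order exactly $2$, which pins $\#E_a(\mathbb{F}_q)$, and hence $K(\lambda;a)$, to a single residue class modulo $4$. Conversely, every admissible $t$ (i.e.\ $|t|<2\sqrt{q}$ with $t\equiv -1\pmod 4$) actually occurs, since such $t$ is the Frobenius trace of some ordinary elliptic curve in the appropriate isogeny class, which one shows is $\mathbb{F}_q$-isomorphic to some $E_a$.

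The core step is the counting: for fixed admissible $t$, the number of $a \in \mathbb{F}_q^*$ with $K(\lambda;a)=t$ matches the Deuring count of ordinary elliptic curves over $\mathbb{F}_q$ with Frobenius trace $-t$, weighted by automorphisms, and Deuring's theorem identifies this weighted count with the Hurwitz--Kronecker class number $H(t^2-q)$ under the convention used in \cite{LW1}. Two ingredients are required: (i) the map $a \mapsto E_a$, after accounting for $\mathbb{F}_q$-twists and the small automorphism groups available in characteristic two, is a faithful parametrization of the ordinary isomorphism classes in question; and (ii) Deuring's theorem equates the size of each such isomorphism class with the class number of the endomorphism order of the Frobenius.

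The main obstacle is step (i): precisely matching the affine parametrization by $a \in \mathbb{F}_q^*$ with the moduli-theoretic count of isomorphism classes of ordinary elliptic curves with prescribed trace. The Weil-type bound and the mod-$4$ congruence drop out once $E_a$ is in hand, but it is here that the characteristic-two subtleties — Artin--Schreier twists, the exact automorphism groups, and the normalization convention for $H$ — all intervene, so the theorem genuinely rests on Deuring's theory rather than on elementary sum manipulations.
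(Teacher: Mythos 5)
The paper does not prove this statement at all: Theorem \ref{C} is quoted from Lachaud--Wolfmann \cite{LW1}, so your attempt can only be measured against the argument of that reference, and your overall plan (elliptic curves, Hasse bound, $2$-torsion, Deuring) is indeed their strategy. However, there is a concrete error at the very first step. The curve you choose, $E_a : y^2+y=x^3+ax$, has vanishing coefficient of $xy$, hence $j$-invariant $0$, and is therefore \emph{supersingular} in characteristic two; moreover, since $y\mapsto y^2+y$ is $2$-to-$1$ onto the kernel of the trace, its point count is
\begin{equation*}
\#E_a(\mathbb{F}_q)=q+1+\sum_{x\in\mathbb{F}_q}\lambda(x^3+ax),
\end{equation*}
a \emph{cubic} exponential sum, not a Kloosterman sum. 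So the identity $\#E_a(\mathbb{F}_q)=q+1+K(\lambda;a)$ is false for this family, as is the claim that $E_a$ is ordinary for $a\neq 0$. The family actually needed is $E_a:y^2+xy=x^3+a$, which is ordinary with $j(E_a)=1/a$; putting $y=xz$ for $x\neq0$ gives $z^2+z=x+ax^{-2}$, and since $tr(ax^{-2})=tr(a^{1/2}x^{-1})$ one finds $\#E_a(\mathbb{F}_q)=q+1+K(\lambda;a^{1/2})$, after which the bijection $a\mapsto a^{1/2}$ of $\mathbb{F}_q^*$ yields the parametrization you want. Everything downstream of your first display (strictness of the Hasse bound via ordinarity, the $2$-torsion structure, the Deuring count) depends on having this ordinary family, so the error is not cosmetic.

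Two further gaps even granting the correct curve. First, your derivation of $K(\lambda;a)\equiv-1\pmod 4$ is incomplete: that the rational $2$-torsion has order exactly $2$ only gives $2\mid\#E_a(\mathbb{F}_q)$, i.e.\ it pins the trace modulo $2$, not modulo $4$; one must in addition produce a rational point of order $4$ (equivalently, show the unique $2$-torsion point is divisible by $2$ in $E_a(\mathbb{F}_q)$ for $r\geq2$), which is exactly the extra lemma in \cite{LW1}. Second, the counting step is asserted rather than argued: you still need that $a\mapsto j(E_a)$ is a bijection of $\mathbb{F}_q^*$ onto the set of ordinary $j$-invariants, together with an accounting of which of the two Artin--Schreier twists of each $j$ (with traces $t$ and $-t$) occurs in the family; this matching of the $a$-line with isomorphism classes is the heart of the proof and is precisely what ``Deuring's theorem weighted by automorphisms'' does not supply by itself. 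Note also that the Deuring--Waterhouse count for a fixed trace $t$ involves the discriminant $t^2-4q$ of Frobenius (which is always negative), and indeed for $q=8$ the multiplicities $H(-7),H(-31),H(-23)=1,3,3$ sum to $7=|\mathbb{F}_8^*|$; the quantity $t^2-q$ appearing in the statement can be positive, so a correct completion of your argument would in fact land on $H(t^2-4q)$.
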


\begin{theorem}\label{D}$($\cite{C2}$)$
For the canonical additive character $\lambda$ of $\mathbb{F}_{q}$,
and $a\in\mathbb{F}_{q}^{*}$,

\begin{align}\label{n}
K_{2}(\lambda;a)=K(\lambda;a)^{2}-q.
\end{align}
\end{theorem}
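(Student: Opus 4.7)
The plan is to expand $K(\lambda;a)^{2}$ as a double sum over $(\alpha_{1},\alpha_{2})\in(\mathbb{F}_{q}^{*})^{2}$, peel off the diagonal $\alpha_{1}=\alpha_{2}$, and reorganize the off-diagonal contribution via a symmetric-function change of variables until it reduces to $K_{2}(\lambda;a)$ plus an explicit remainder of size $q$.

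First I would write
\begin{equation*}
K(\lambda;a)^{2}=\sum_{\alpha_{1},\alpha_{2}\in\mathbb{F}_{q}^{*}}\lambda(\alpha_{1}+\alpha_{2}+a\alpha_{1}^{-1}+a\alpha_{2}^{-1}).
\end{equation*}
On the diagonal the argument of $\lambda$ vanishes because $q=2^{r}$, so the diagonal contributes exactly $q-1$. For the off-diagonal part I parametrize by $u=\alpha_{1}+\alpha_{2}\in\mathbb{F}_{q}^{*}$ and $v=\alpha_{1}^{-1}+\alpha_{2}^{-1}=u/(\alpha_{1}\alpha_{2})\in\mathbb{F}_{q}^{*}$. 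Then $\{\alpha_{1},\alpha_{2}\}$ is exactly the root set of $t^{2}+ut+u/v$, and the Artin--Schreier criterion in characteristic two tells us this polynomial has two distinct (automatically nonzero) roots in $\mathbb{F}_{q}$ iff $\mathrm{tr}(1/(uv))=0$. Hence each admissible pair $(u,v)$ contributes exactly two ordered pairs $(\alpha_{1},\alpha_{2})$, giving
\begin{equation*}
K(\lambda;a)^{2}=(q-1)+2\sum_{\substack{u,v\in\mathbb{F}_{q}^{*}\\ \mathrm{tr}(1/(uv))=0}}\lambda(u+av).
\end{equation*}

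Next I would substitute $w=1/(uv)$, so that $(u,w)$ ranges over $(\mathbb{F}_{q}^{*})^{2}$ with the cleaner constraint $\mathrm{tr}(w)=0$, and $\lambda(u+av)=\lambda(u+(a/w)u^{-1})$. Summing over $u$ first collapses the inner sum to $K(\lambda;a/w)$. Applying the identity $\mathbf{1}_{\mathrm{tr}(w)=0}=\tfrac{1}{2}(1+\lambda(w))$ to remove the trace constraint converts the off-diagonal contribution into
\begin{equation*}
\sum_{w\in\mathbb{F}_{q}^{*}}K(\lambda;a/w)+\sum_{w\in\mathbb{F}_{q}^{*}}\lambda(w)K(\lambda;a/w).
\end{equation*}
A short calculation (swap summation, use character orthogonality) identifies the first sum with $MK^{1}=1$. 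Expanding $K(\lambda;a/w)$ in the second and relabelling the dummy variables shows it is literally the defining double sum of $K_{2}(\lambda;a)$. Assembling the pieces yields $K(\lambda;a)^{2}=(q-1)+1+K_{2}(\lambda;a)=q+K_{2}(\lambda;a)$, which rearranges to the claim.

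The delicate step is the characteristic-two quadratic count: I need to verify carefully that $\mathrm{tr}(1/(uv))=0$ is exactly the splitting criterion for $t^{2}+ut+u/v$ over $\mathbb{F}_{q}$, that the two roots are nonzero whenever they exist (which uses $u/v\ne 0$), and that off-diagonal pairs never collapse to a double root. Once this combinatorial accounting is secured, the remaining rearrangements are purely formal manipulations of finite character sums.
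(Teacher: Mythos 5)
Your argument is correct. Note first that the paper itself gives no proof of this identity: Theorem \ref{D} is simply quoted from Carlitz \cite{C2}, so there is no in-paper argument to compare against, and your write-up supplies a complete, self-contained proof of a fact the paper treats as a black box. The key steps all check out: the diagonal of the double sum contributes $q-1$ because $2\alpha=0$ in characteristic two; for $\alpha_{1}\neq\alpha_{2}$ the pair $(u,v)=(\alpha_{1}+\alpha_{2},\alpha_{1}^{-1}+\alpha_{2}^{-1})$ lies in $(\mathbb{F}_{q}^{*})^{2}$, and the substitution $t=us$ turns $t^{2}+ut+u/v$ into $s^{2}+s=1/(uv)$, so the Artin--Schreier criterion does give splitting exactly when $\mathrm{tr}(1/(uv))=0$, with the two roots distinct (the polynomial is separable since $u\neq0$) and nonzero (their product is $u/v\neq0$); the fibres of $(\alpha_{1},\alpha_{2})\mapsto(u,v)$ therefore all have size two. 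The indicator identity $\mathbf{1}_{\mathrm{tr}(w)=0}=\tfrac{1}{2}(1+\lambda(w))$, the evaluation $\sum_{w\in\mathbb{F}_{q}^{*}}K(\lambda;a/w)=MK^{1}=1$, and the recognition of $\sum_{w}\lambda(w)K(\lambda;a/w)$ as the defining double sum of $K_{2}(\lambda;a)$ are all correct, giving $K(\lambda;a)^{2}=(q-1)+1+K_{2}(\lambda;a)$ as claimed. This is essentially the classical counting argument behind Carlitz's identity; the only caveat is that your proof is specific to $q=2^{r}$ (the diagonal evaluation and the Artin--Schreier step both use characteristic two), which is exactly the setting of the paper, so nothing is lost.
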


Before we proceed further, we will fix the notations that will be
used throughout this paper:

\begin{align*}
\begin{split}
q&=2^{r}~(r\in \mathbb{Z}_{>0}),\\
\mathbb{F}_{q}&= the~finite ~field ~with ~q ~elements,\\
tr(x)&=x+x^{2}+\cdots+x^{{2}^{r-1}} ~the~ trace~function~ \mathbb{F}_{q}~\rightarrow ~\mathbb{F}_{2},\\
\lambda(x)&=(-1)^{tr(x)} ~the~ canonical~ additive~ character~ of~
\mathbb{F}_{q}^{}.
\end{split}
\end{align*}

Note that any nontrivial additive character $\psi$ of
$\mathbb{F}_{q}$ is given by $\psi(x)=\lambda(ax)$, for a unique
$a\in\mathbb{F}_{q}^{*}$.\\

\section{Construction of codes associated with multi-dimensional Kloosterman sums}

We will construct binary linear codes ${C}_{n-1}$ of length
$N_{1}=(q-1)^{n-1}$, connected with the $(n-1)$-dimensional
Kloosterman sums. Here $n=2^{s}$, with $s\in \mathbb{Z}_{>0}$.

Let

\begin{align}\label{o}
v_{n-1}=(\cdots,\alpha_{1}+\cdots+\alpha_{n-1}+\alpha_{1}^{-1}\cdots
\alpha_{n-1}^{-1},\cdots),
\end{align}
where $\alpha_{1}^{},\alpha_{2}^{},\cdots,\alpha_{n-1}^{}$ run
respectively over all elements of $\mathbb{F}_{q}^{*}$. Here we do
not specify the ordering of the components of $v_{n-1}$, but we
assume that some ordering is fixed.\\

\begin{proposition}\label{E}$($\cite{D1}, Proposition 11$)$
For each $\beta \in \mathbb{F}_{q}$, let

\begin{align*}
\delta(n-1,q;\beta)=|\{(\alpha_{1}^{},\cdots,\alpha_{n-1}^{})\in(\mathbb{F}_{q}^{*})^{n-1}|
\alpha_{1}^{}+\cdots,+\alpha_{n-1}^{}+\alpha_{1}^{-1}\cdots\alpha_{n-1}^{-1}=\beta\}|
\end{align*}
(Note that $\delta(n-1,q;\beta)$ is the number of components with
those equal to $\beta$ in the vector $v_{n-1}$(cf.(\ref{o}))).\\
Then
\begin{align*}
\delta(n-1,q;0)=q^{-1}\{(q-1)^{n-1}+1\},
\end{align*}
and, for $\beta\in\mathbb{F}_{q}^{*}$,

\begin{align*}
\delta(n-1,q;\beta)=K_{n-2}(\lambda;\beta^{-1})+q^{-1}\{(q-1)^{n-1}+1\},
\end{align*}
where $K_{0}(\lambda;\beta^{-1})=\lambda(\beta^{-1})$ by
convention.\\

\end{proposition}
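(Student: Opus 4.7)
\emph{Proof plan.} The natural tool is additive-character orthogonality over $\mathbb{F}_q$: one writes
\[
\delta(n-1,q;\beta)=\frac{1}{q}\sum_{\gamma\in\mathbb{F}_q}\lambda(\gamma\beta)\sum_{\alpha_1,\ldots,\alpha_{n-1}\in\mathbb{F}_q^{*}}\lambda\bigl(\gamma(\alpha_1+\cdots+\alpha_{n-1}+(\alpha_1\cdots\alpha_{n-1})^{-1})\bigr),
\]
using that $-\gamma\beta=\gamma\beta$ in characteristic $2$. The $\gamma=0$ summand contributes $(q-1)^{n-1}/q$; for each $\gamma\neq 0$ the substitution $\alpha_i=\beta_i/\gamma$ converts the inner sum into $K_{n-1}(\lambda;\gamma^n)$, because $\gamma/(\alpha_1\cdots\alpha_{n-1})=\gamma^{n}/(\beta_1\cdots\beta_{n-1})$. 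The remaining task is to evaluate $S(\beta):=\sum_{\gamma\in\mathbb{F}_q^{*}}\lambda(\gamma\beta)K_{n-1}(\lambda;\gamma^n)$.

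Next I would expand $K_{n-1}(\lambda;\gamma^n)$ and swap the order of summation. The crucial input is that $n=2^{s}$ and $\operatorname{char}\mathbb{F}_q=2$, so $x\mapsto x^{n}$ is an $s$-fold Frobenius; in particular $\lambda(x^{n})=\lambda(x)$ for every $x\in\mathbb{F}_q$. Hence, with $B=\beta_1\cdots\beta_{n-1}$,
\[
\lambda\bigl(\gamma\beta+\gamma^{n}/B\bigr)=\lambda(\gamma(\beta+B^{-1/n})),
\]
where $B^{-1/n}$ denotes the unique $n$-th root of $B^{-1}$. The inner $\gamma$-sum therefore collapses by orthogonality to $q-1$ if $B=\beta^{-n}$ and to $-1$ otherwise, yielding
\[
S(\beta)=q\sum_{\substack{\beta_1,\ldots,\beta_{n-1}\in\mathbb{F}_q^{*}\\ \beta_1\cdots\beta_{n-1}=\beta^{-n}}}\lambda(\beta_1+\cdots+\beta_{n-1})-(-1)^{n-1}.
\]

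For $\beta\neq 0$, I would parametrise the constrained sum by $\beta_1,\ldots,\beta_{n-2}$ (with $\beta_{n-1}$ forced to be $\beta^{-n}/(\beta_1\cdots\beta_{n-2})$) to rewrite it as $K_{n-2}(\lambda;\beta^{-n})$; a second application of $\lambda(x^{n})=\lambda(x)$ then gives $K_{n-2}(\lambda;\beta^{-n})=K_{n-2}(\lambda;\beta^{-1})$, and evenness of $n$ turns $-(-1)^{n-1}$ into $1$. Combining with the $\gamma=0$ contribution and dividing by $q$ yields $\delta(n-1,q;\beta)=K_{n-2}(\lambda;\beta^{-1})+q^{-1}\{(q-1)^{n-1}+1\}$, as claimed. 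For $\beta=0$ the relation $\beta+B^{-1/n}=0$ is never satisfied (since $B^{-1/n}\neq 0$), so only the $-(-1)^{n-1}=1$ piece survives and gives $\delta(n-1,q;0)=q^{-1}\{(q-1)^{n-1}+1\}$. The $n=2$ boundary case matches the convention $K_{0}(\lambda;\beta^{-1})=\lambda(\beta^{-1})$, because the constrained sum then consists of the single term $\lambda(\beta^{-2})=\lambda(\beta^{-1})$. The step I expect to require the most care, and the point where the hypothesis $n=2^{s}$ is genuinely used, is the twofold invocation of the Frobenius identity $\lambda(x^{n})=\lambda(x)$: first to identify $\lambda(\gamma^{n}/B)$ with $\lambda(\gamma B^{-1/n})$, and then to bring $K_{n-2}(\lambda;\beta^{-n})$ back to $K_{n-2}(\lambda;\beta^{-1})$.
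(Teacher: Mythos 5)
Your argument is correct and complete: the orthogonality expansion, the identification of the $\gamma\neq 0$ terms with $K_{n-1}(\lambda;\gamma^{n})$, the collapse of the $\gamma$-sum via $\lambda(x^{n})=\lambda(x)$ for $n=2^{s}$, and the final reduction $K_{n-2}(\lambda;\beta^{-n})=K_{n-2}(\lambda;\beta^{-1})$ all check out, including the $\beta=0$ and $n=2$ boundary cases. The paper itself gives no proof (the statement is quoted from \cite{D1}, Proposition 11), but your route is the standard one and uses exactly the same Frobenius device ($\alpha\mapsto\alpha^{n}$ together with $\lambda(x^{n})=\lambda(x)$) that the paper deploys in the weight computation of Lemma \ref{K}, so it matches the intended argument.
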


\begin{corollary}\label{F}
$(1)$
\begin{equation}\label{p}
\delta(1,q;\beta)=
\begin{cases}
2, & \text{if ~$tr(\beta^{-1})=0$},\\
1,& \text {if ~$\beta = 0$,}\\
0,& \text{if ~$tr(\beta^{-1}) =1$.}
\end{cases}
\qquad \qquad \qquad \qquad \qquad \qquad \qquad~~~~
\end{equation}

$(2)$
\begin{equation}\label{q}
\delta(3,q;\beta)=
\begin{cases}
q^{2}-3q+3, & \text{if ~$\beta=0$},\\
K(\lambda;\beta^{-1})^{2}+q^{2}-4q+3,& \text {if ~$\beta \in
\mathbb{F}_{q}^{*}~(cf.(\ref{n}))$.}
\end{cases}
\end{equation}

\end{corollary}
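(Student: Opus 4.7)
The plan is to obtain both parts by directly specializing Proposition \ref{E} to $n=2$ and $n=4$, then simplifying using the conventions on $K_{0}$ and Theorem \ref{D} on $K_{2}$.

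For part (1), I set $n=2$ in Proposition \ref{E}. The formula for $\beta=0$ gives $\delta(1,q;0)=q^{-1}\{(q-1)+1\}=1$. For $\beta\in\mathbb{F}_{q}^{*}$, Proposition \ref{E} yields
\begin{align*}
\delta(1,q;\beta)=K_{0}(\lambda;\beta^{-1})+q^{-1}\{(q-1)+1\}=\lambda(\beta^{-1})+1,
\end{align*}
using the stated convention $K_{0}(\lambda;\beta^{-1})=\lambda(\beta^{-1})$. Since $\lambda(\beta^{-1})=(-1)^{tr(\beta^{-1})}$ takes the value $+1$ when $tr(\beta^{-1})=0$ and $-1$ when $tr(\beta^{-1})=1$, this separates into the two cases $\delta(1,q;\beta)=2$ and $\delta(1,q;\beta)=0$, completing (\ref{p}).

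For part (2), I set $n=4$ in Proposition \ref{E}. For $\beta=0$, I expand
\begin{align*}
(q-1)^{3}+1=q^{3}-3q^{2}+3q=q(q^{2}-3q+3),
\end{align*}
so dividing by $q$ gives $\delta(3,q;0)=q^{2}-3q+3$. For $\beta\in\mathbb{F}_{q}^{*}$, Proposition \ref{E} gives
\begin{align*}
\delta(3,q;\beta)=K_{2}(\lambda;\beta^{-1})+q^{2}-3q+3.
\end{align*}
Now I invoke Theorem \ref{D}, which identifies $K_{2}(\lambda;\beta^{-1})=K(\lambda;\beta^{-1})^{2}-q$. Substituting this gives
\begin{align*}
\delta(3,q;\beta)=K(\lambda;\beta^{-1})^{2}-q+q^{2}-3q+3=K(\lambda;\beta^{-1})^{2}+q^{2}-4q+3,
\end{align*}
which is the formula in (\ref{q}).

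There is no genuine obstacle here: the whole corollary is a mechanical specialization of previously cited results, with the only mildly nontrivial step being the algebraic simplification of $(q-1)^{3}+1$ and the appeal to Theorem \ref{D} to rewrite the two-dimensional Kloosterman sum as a polynomial in the one-dimensional one. The only points worth stating explicitly in the write-up are the use of the convention on $K_{0}$ in part (1) and of Carlitz's identity (\ref{n}) in part (2).
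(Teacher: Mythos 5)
Your proposal is correct and is exactly the derivation the paper intends: the corollary is stated without proof as an immediate specialization of Proposition \ref{E} to $n=2$ and $n=4$, using the convention $K_{0}(\lambda;\beta^{-1})=\lambda(\beta^{-1})$ for part (1) and Carlitz's identity (\ref{n}) for part (2), precisely as you do. All the arithmetic checks out.
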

The binary linear code ${C}_{n-1}$ is defined as

\begin{align}\label{r}
{C}_{n-1}=\{u\in\mathbb{F}_{2}^{N_1}|u\cdot v_{n-1}=0\},
\end{align}
where the dot denotes the usual inner product in
$\mathbb{F}_q^{N_1}$.\\

The following Delsarte's theorem is well-known.\\

\begin{theorem}\label{G}$($\cite{MS1}$)$

Let $B$ be a linear code over $\mathbb{F}_{q}^{}$. Then

\begin{align*}
(B|_{\mathbb{F}_{2}})^{\bot}=tr(B^{\bot}).\\
\end{align*}
\end{theorem}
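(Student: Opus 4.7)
My plan is to prove the equivalent statement $(tr(B^{\bot}))^{\bot}=B|_{\mathbb{F}_{2}}$ inside $\mathbb{F}_{2}^{N}$ (where $N$ is the length of $B$), and then take $\mathbb{F}_{2}$-perpendiculars of both sides. The central tool will be the non-degeneracy of the trace pairing $(\alpha,\beta)\mapsto tr(\alpha\beta)$ on $\mathbb{F}_{q}$, which is available because $\mathbb{F}_{q}/\mathbb{F}_{2}$ is separable; once that is in hand, everything else is essentially bookkeeping.

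First I would establish the easy inclusion $B|_{\mathbb{F}_{2}}\subseteq(tr(B^{\bot}))^{\bot}$. For $x\in B|_{\mathbb{F}_{2}}$ and $u\in B^{\bot}$, the fact that each coordinate $x_{j}$ lies in $\mathbb{F}_{2}$ lets me pull $x_{j}$ inside the trace, giving
\begin{equation*}
tr(u)\cdot x=\sum_{j}tr(u_{j})x_{j}=\sum_{j}tr(u_{j}x_{j})=tr(u\cdot x)=tr(0)=0.
\end{equation*}
Thus every generator $tr(u)$ of $tr(B^{\bot})$ annihilates $x$.

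For the reverse inclusion $(tr(B^{\bot}))^{\bot}\subseteq B|_{\mathbb{F}_{2}}$, suppose $x\in\mathbb{F}_{2}^{N}$ satisfies $tr(u)\cdot x=0$ for every $u\in B^{\bot}$. By the same pull-in-trace computation this is $tr(u\cdot x)=0$. Here is where I would exploit the $\mathbb{F}_{q}$-structure of $B^{\bot}$: for any $\alpha\in\mathbb{F}_{q}$ the vector $\alpha u$ still lies in $B^{\bot}$, so $tr(\alpha(u\cdot x))=0$ for every $\alpha\in\mathbb{F}_{q}$. Non-degeneracy of the trace form then forces $u\cdot x=0$. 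Since this holds for every $u\in B^{\bot}$, we conclude $x\in(B^{\bot})^{\bot}=B$, and therefore $x\in B\cap\mathbb{F}_{2}^{N}=B|_{\mathbb{F}_{2}}$.

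Finally, since $tr$ is $\mathbb{F}_{2}$-linear and $B^{\bot}$ is closed under $\mathbb{F}_{2}$-addition, the image $tr(B^{\bot})$ is an $\mathbb{F}_{2}$-linear subspace of $\mathbb{F}_{2}^{N}$, so the usual double-perpendicular identity $(V^{\bot})^{\bot}=V$ applies and yields $(B|_{\mathbb{F}_{2}})^{\bot}=((tr(B^{\bot}))^{\bot})^{\bot}=tr(B^{\bot})$. The step I expect to require the most care is the reverse inclusion: one must use the full $\mathbb{F}_{q}$-scalar action on $B^{\bot}$, not merely the $\mathbb{F}_{2}$-structure, to promote the scalar identity $tr(u\cdot x)=0$ to the vanishing of $u\cdot x$ itself via non-degeneracy of the trace. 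Everything else reduces to linear-algebraic manipulations inside $\mathbb{F}_{2}^{N}$.
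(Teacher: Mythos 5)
Your proof is correct. The paper does not actually prove this statement --- it is quoted as Delsarte's theorem with a citation to MacWilliams--Sloane --- so there is no internal argument to compare against; your argument is the standard one. Each step checks out: the pull-the-$\mathbb{F}_{2}$-coordinate-inside-the-trace computation $\sum_{j}tr(u_{j})x_{j}=tr(u\cdot x)$ gives the easy inclusion; for the reverse inclusion you correctly identify the essential point, namely that $B^{\bot}$ is an $\mathbb{F}_{q}$-subspace (not merely an $\mathbb{F}_{2}$-subspace), so that $tr(\alpha(u\cdot x))=0$ for all $\alpha\in\mathbb{F}_{q}$ and non-degeneracy of the trace form of the separable extension $\mathbb{F}_{q}/\mathbb{F}_{2}$ forces $u\cdot x=0$, whence $x\in(B^{\bot})^{\bot}=B$. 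The final passage from $(tr(B^{\bot}))^{\bot}=B|_{\mathbb{F}_{2}}$ to the stated identity is justified because $tr(B^{\bot})$ is an additive (hence $\mathbb{F}_{2}$-linear) subspace of $\mathbb{F}_{2}^{N}$ and the standard dot product on $\mathbb{F}_{2}^{N}$ is non-degenerate, so the double-perpendicular identity applies.
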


In view of this theorem, the dual ${C}_{n-1}^{\bot}$ of ${C}_{n-1}$
is given by

\begin{align}\label{s}
{C}_{n-1}^{\bot}=\{c(a)=(\cdots,tr(a(\alpha_{1}^{}+\cdots+
\alpha_{n-1}^{}+\alpha_{1}^{-1}\cdots\alpha_{n-1}^{-1})),\cdots)|a\in\mathbb{F}_{q}\}.
\end{align}

\indent
\begin{lemma}\label{F}

$(q-1)^{n-1}>nq^{\frac{n-1}{2}}$, for all $n=2^{s}~(s\in
\mathbb{Z}_{>0})$, and $ q=2^{r}\geq8$.

\end{lemma}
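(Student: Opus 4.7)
The plan is to recast the inequality in multiplicative form and induct on $s$. Dividing both sides by $q^{(n-1)/2}$, the claim $(q-1)^{n-1} > n q^{(n-1)/2}$ becomes
\begin{equation*}
\left(\frac{q-1}{\sqrt{q}}\right)^{n-1} > n,
\end{equation*}
so I want to show that for $q \geq 8$ the ratio $(q-1)/\sqrt{q}$ is large enough that its $(n-1)$-th power beats $n$ for every $n = 2^s$, $s \geq 1$. Since the quantity $n-1$ is odd while $(n-1)/2$ is not an integer, I prefer to avoid passing through $\sqrt{q}$ and instead work with integer exponents by inducting on $s$, where doubling $n$ turns $2n-1$ into an odd exponent obtained from an even one $n$ plus the previous odd one $n-1$.

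For the base case $s = 1$, $n = 2$, I need $q - 1 > 2\sqrt{q}$, equivalently $(q-1)^2 > 4q$, i.e., $q^2 - 6q + 1 > 0$. This polynomial is increasing for $q \geq 3$ and takes the value $17 > 0$ at $q = 8$, so the inequality holds for all $q = 2^r \geq 8$.

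For the inductive step, assume $(q-1)^{n-1} > n q^{(n-1)/2}$ for $n = 2^s$, and aim to deduce the analogous inequality for $2n = 2^{s+1}$, namely $(q-1)^{2n-1} > 2n\, q^{(2n-1)/2}$. Multiplying the inductive hypothesis by $(q-1)^n$ gives
\begin{equation*}
(q-1)^{2n-1} > n q^{(n-1)/2} (q-1)^n,
\end{equation*}
so it suffices to check $(q-1)^n \geq 2 q^{n/2}$, equivalently $\bigl((q-1)/\sqrt{q}\bigr)^n \geq 2$. But the base case already guarantees $(q-1)/\sqrt{q} > 2$ for $q \geq 8$, and $n \geq 2$, so $\bigl((q-1)/\sqrt{q}\bigr)^n > 2^n \geq 4 > 2$, completing the induction.

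There is essentially no real obstacle here: the content is a routine exponential-beats-linear estimate, and the only mild care needed is the bookkeeping of integer versus half-integer exponents, which is exactly why induction on $s$ (doubling $n$ at each step) is the cleanest vehicle. The whole argument ultimately reduces to the single numerical inequality $q^2 - 6q + 1 > 0$ at $q = 8$.
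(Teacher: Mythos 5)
Your proof is correct and follows exactly the route the paper indicates: the paper's entire proof is the single sentence ``This can be proved, for example, by induction on $s$,'' and your argument is a complete, correct instantiation of that induction, with the base case reducing to $q^2-6q+1>0$ and the inductive step to $(q-1)^n\geq 2q^{n/2}$.
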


\begin{proof}
This can be proved, for example, by induction on $s$.
\end{proof}

\indent
\begin{proposition}\label{H}
For $q=2^{r}$, with $r\geq3$, the map
$\mathbb{F}_{q}\rightarrow{C}_{n-1}^{\bot}(a\mapsto c(a))$ is an
$\mathbb{F}_{2}$-linear isomorphism.
\end{proposition}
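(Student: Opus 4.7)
The plan is to split the claim into three pieces: $\mathbb{F}_{2}$-linearity, surjectivity, and injectivity of the map $a \mapsto c(a)$. $\mathbb{F}_{2}$-linearity is immediate from the componentwise $\mathbb{F}_{2}$-linearity of the trace (since $\mathrm{tr}((a+b)w) = \mathrm{tr}(aw) + \mathrm{tr}(bw)$ for every fixed $w \in \mathbb{F}_{q}$), and surjectivity onto ${C}_{n-1}^{\bot}$ is built directly into its definition (\ref{s}). So the whole content of the proposition lies in proving injectivity, i.e., that $c(a) = 0$ forces $a = 0$.

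For injectivity I would assume $c(a) = 0$, which by the description in (\ref{s}) means that
\[
\mathrm{tr}\bigl(a(\alpha_{1}+\cdots+\alpha_{n-1}+\alpha_{1}^{-1}\cdots\alpha_{n-1}^{-1})\bigr) = 0
\]
for every $(\alpha_{1},\ldots,\alpha_{n-1}) \in (\mathbb{F}_{q}^{*})^{n-1}$. Since $\lambda(x)=(-1)^{\mathrm{tr}(x)}$, this says $\lambda$ takes the value $+1$ on every one of these arguments, and summing over all $(q-1)^{n-1}$ tuples yields
\[
\sum_{(\alpha_{1},\ldots,\alpha_{n-1})\in(\mathbb{F}_{q}^{*})^{n-1}} \lambda\bigl(a(\alpha_{1}+\cdots+\alpha_{n-1}+\alpha_{1}^{-1}\cdots\alpha_{n-1}^{-1})\bigr) = (q-1)^{n-1}.
\]
If $a \neq 0$, the substitution $\beta_{i} = a\alpha_{i}$ rewrites this sum as the Kloosterman sum $K_{n-1}(\lambda; a^{n})$, since $a \cdot \alpha_{1}^{-1}\cdots\alpha_{n-1}^{-1} = a^{n}\beta_{1}^{-1}\cdots\beta_{n-1}^{-1}$. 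The Deligne bound (\ref{a}) then gives $|K_{n-1}(\lambda; a^{n})| \leq n q^{(n-1)/2}$, while the preceding lemma supplies $(q-1)^{n-1} > n q^{(n-1)/2}$ for $q = 2^{r} \geq 8$. These two inequalities contradict the displayed equality, so $a = 0$.

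The only point requiring real care is the exponent bookkeeping in the substitution $\beta_{i} = a\alpha_{i}$: the outer factor $a$ combines with the $a^{n-1}$ arising from $(\alpha_{1}\cdots\alpha_{n-1})^{-1}$ to produce exactly $a^{n}$, which is precisely the parameter at which the Deligne bound and the preceding lemma can be applied as stated. Apart from this small check, the argument is a routine trace-kernel-plus-character-sum-bound and I do not anticipate any further obstacle; the hypothesis $r \geq 3$ enters only to guarantee $q \geq 8$ so that the lemma's inequality is available.
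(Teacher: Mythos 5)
Your proof is correct and follows essentially the same route as the paper: linearity and surjectivity are immediate, and injectivity follows by turning a kernel element $a\neq 0$ into the character-sum identity $\sum\lambda(a(\alpha_{1}+\cdots+\alpha_{n-1}+\alpha_{1}^{-1}\cdots\alpha_{n-1}^{-1}))=(q-1)^{n-1}$, which contradicts the Deligne bound together with Lemma \ref{F}. The only cosmetic difference is that you identify the sum as $K_{n-1}(\lambda;a^{n})$ via the substitution $\beta_{i}=a\alpha_{i}$, while the paper (citing the proof of Proposition 11 in \cite{D1}) pushes one step further to $K_{n-1}(\lambda;a)$; since the Deligne bound is uniform in the nonzero parameter and $a^{n}\neq 0$, your shorter version suffices.
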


\begin{proof}
The map is clearly $\mathbb{F}_{2}$-linear and onto. Let $a$ be in
the kernel of the map. Then
$tr(a(\alpha_{1}^{}+\cdots+\alpha_{n-1}^{}+\alpha_{1}^{-1}\cdots\alpha_{n-1}^{-1}))=0$,
 for all $\alpha_{1}^{},\cdots,\alpha_{n-1}^{}\in\mathbb{F}_{q}^{*}$.
 Suppose that $a\neq0$. Then, on the one hand,

\begin{align}\label{t}
\sum_{\alpha_{1}^{},\cdots,\alpha_{n-1}^{}\in\mathbb{F}_{q}^{*}}(-1)^{tr(a(\alpha_{1}^{}
+\cdots+\alpha_{n-1}^{}+\alpha_{1}^{-1}\cdots\alpha_{n-1}^{-1}))}=(q-1)^{n-1}=N_{1}.
\end{align}
On the other hand, (\ref{t}) is equal to $K_{n-1}(\lambda;a)$ (cf.
proof of Proposition 11 in \cite{D1}), and so from Deligne's
estimate in (\ref{a}) we get
\begin{align*}
(q-1)^{n-1}\leq nq^{\frac{n-1}{2}}.
\end{align*}
But this is impossible for $q\geq8$, in view of Lemma \ref{F}.
\end{proof}

\section{Recursive formulas for power moments of multi-dimensional Kloostermann sums}
\indent We are now ready to derive, via Pless power moment identity,
a recursive formula for the power moments of multi-dimensional
Kloosterman sums in terms of the frequencies of weights in
${C}_{n-1}$. \indent
\begin{theorem}\label{I}$($Pless power moment identity,
\cite{MS1}$)$ Let $B$ be an $q$-ary [$n,k$] code, and let
$B_{i}$(resp. $B_{i}^{\bot}$) denote the number of codewords of
weight $i$ in $B$(resp. in $B^{\bot}$). Then, for $h=0,1,2,\cdots,$

\begin{align}\label{u}
\sum_{i=0}^{n}~i^{h}B_{i}=\sum_{i=0}^{min\{n,h\}}(-1)^{i}B_{i}^{\bot}
\sum_{t=i}^{h}t!S(h,t)q^{k-t}(q-1)^{t-i}{\binom{n-i}{n-t}},
\end{align}

where $S(h,t)$ is the Stirling number of the second kind defined
in(\ref{c}).\\

\end{theorem}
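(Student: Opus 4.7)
The plan is to derive Pless' identity from the MacWilliams transform together with the classical expansion of integer powers in falling factorials. First I would use the identity
\[
i^h = \sum_{t=0}^h t!\, S(h,t) \binom{i}{t}
\]
(valid for every nonnegative integer $i$, with $S(h,t)$ as in (\ref{c})) to rewrite the left-hand side of (\ref{u}) as
\[
\sum_{i=0}^n i^h B_i = \sum_{t=0}^h t!\, S(h,t)\, T_t, \qquad T_t := \sum_{i=0}^n \binom{i}{t} B_i.
\]
This reduces the problem to expressing each $T_t$ in terms of the dual weight distribution $\{B_j^\perp\}$.

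Second, I would observe that $T_t$ is a $y$-derivative of the weight enumerator $W_B(x,y) = \sum_i B_i\, x^{n-i} y^i$, namely
\[
T_t = \frac{1}{t!}\, \partial_y^t W_B(x,y)\,\Big|_{x=y=1}.
\]
Invoking MacWilliams in the form $W_B(x,y) = |B^\perp|^{-1}\, W_{B^\perp}(x+(q-1)y,\; x-y)$, it then suffices to differentiate each summand $(x+(q-1)y)^{n-j}(x-y)^j$ exactly $t$ times in $y$, evaluate at $x=y=1$, and sum the result against $B_j^\perp$.

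Third, and this is the crux of the computation, I would apply Leibniz's rule to the product above. Since the factor $(x-y)^j$ vanishes to order exactly $j$ at $x=y=1$, the only term in the Leibniz expansion $\partial_y^t[fg] = \sum_s \binom{t}{s} f^{(s)} g^{(t-s)}$ that survives is $s = t - j$, and only when $j \leq t$. A short bookkeeping calculation (using $|B^\perp| = q^{n-k}$) then gives
\[
T_t = q^{k-t} \sum_{j=0}^t (-1)^j (q-1)^{t-j} \binom{n-j}{n-t} B_j^\perp.
\]
Substituting this back into $\sum_i i^h B_i = \sum_t t!\,S(h,t)\, T_t$ and interchanging the $t$- and $j$-summations (over $0 \leq j \leq t \leq \min\{n,h\}$, with the convention that $\binom{n-j}{n-t}$ vanishes for $j > t$ or $t > n$) yields exactly formula (\ref{u}). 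The only subtle step is verifying that the Leibniz expansion collapses to a single surviving term; everything else is routine algebra, and no ideas beyond the MacWilliams identity and the Stirling expansion of powers are required.
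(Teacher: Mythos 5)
Your derivation is correct: the expansion $i^h=\sum_{t=0}^{h}t!\,S(h,t)\binom{i}{t}$, the identification of $T_t$ with $\frac{1}{t!}\partial_y^t W_B(x,y)\big|_{x=y=1}$, the MacWilliams transform, and the collapse of the Leibniz sum to the single term where $(x-y)^j$ is differentiated exactly $j$ times all check out, and the bookkeeping with $|B^\perp|=q^{n-k}$ does yield $T_t=q^{k-t}\sum_{j=0}^{t}(-1)^j(q-1)^{t-j}\binom{n-j}{n-t}B_j^\perp$ and hence (\ref{u}). The paper itself offers no proof --- it quotes the identity from \cite{MS1} --- and your argument is essentially the standard one found there, so there is nothing to reconcile.
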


For the following lemma, observe that $(n,q-1)=1$.\\

\begin{lemma}\label{J}
The map $a \mapsto a^{n} : \mathbb{F}_{q}^{*}
\rightarrow\mathbb{F}_{q}^{*}$ is a bijection.
\end{lemma}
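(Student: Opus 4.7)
The plan is to exploit the hint that $\gcd(n, q-1) = 1$, which follows immediately from the fact that $n = 2^s$ is a power of two while $q-1 = 2^r - 1$ is odd.

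First I would recall that $\mathbb{F}_q^*$ is a cyclic group of order $q-1$. On any finite cyclic group $G$ of order $N$, the power map $x \mapsto x^n$ is a group endomorphism whose kernel consists of the elements whose order divides $\gcd(n, N)$. Since $\mathbb{F}_q^*$ and its image are finite sets of the same cardinality, injectivity is equivalent to bijectivity, so it suffices to check that the kernel is trivial.

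Applying this with $N = q-1$ and using $\gcd(n, q-1) = 1$, the kernel is $\{1\}$, so $a \mapsto a^n$ is injective and hence bijective. Alternatively, one can exhibit the inverse explicitly: choose integers $u, v$ with $un + v(q-1) = 1$ via the Euclidean algorithm, and then the map $b \mapsto b^u$ is a two-sided inverse, since $(a^n)^u = a^{1 - v(q-1)} = a$ for all $a \in \mathbb{F}_q^*$ by Fermat's little theorem.

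There is essentially no obstacle here; the whole content is the coprimality observation already flagged in the preceding remark. The proof will be a single short sentence or two. I would expect the author to dispose of it in one line.
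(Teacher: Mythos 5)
Your proof is correct and follows exactly the route the paper intends: the paper gives no explicit proof, only the preceding remark ``observe that $(n,q-1)=1$,'' which is precisely the coprimality fact ($n=2^{s}$ a power of two, $q-1=2^{r}-1$ odd) that your argument turns into a one-line verification via the cyclic structure of $\mathbb{F}_{q}^{*}$. Nothing is missing.
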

\indent

\begin{lemma}\label{K}
For $a\in\mathbb{F}_{q}^{*}$, the Hamming weight $w(c(a))$(cf. (\ref
{s})) of $c(a)$ can be expressed as follows:

\begin{align}\label{v}
w(c(a))={\frac{N_{1}}{2}}-{\frac{1}{2}}K_{n-1}(\lambda;a),~with~
N_{1}=(q-1)^{n-1}.
\end{align}

\end{lemma}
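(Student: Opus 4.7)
The plan is to express $w(c(a))$ as an exponential sum by applying $(-1)^b=1-2b$ coordinatewise, recognise the resulting sum as $K_{n-1}(\lambda;a^n)$ after a simple rescaling of the $\alpha_i$, and then reduce $K_{n-1}(\lambda;a^n)$ to $K_{n-1}(\lambda;a)$ using the characteristic-two Frobenius identity together with $n=2^s$.

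Concretely, unpacking the coordinates of $c(a)$ from (\ref{s}) via $(-1)^b=1-2b$ yields
\begin{align*}
w(c(a)) = \frac{N_1}{2} - \frac{1}{2}\sum_{\alpha_1,\dots,\alpha_{n-1}\in\mathbb{F}_q^*} \lambda\bigl(a\alpha_1+\cdots+a\alpha_{n-1}+a\alpha_1^{-1}\cdots\alpha_{n-1}^{-1}\bigr).
\end{align*}
The bijective change of variables $\alpha_i=\beta_i/a$ on $\mathbb{F}_q^*$ (valid since $a\neq 0$) sends each $a\alpha_i$ to $\beta_i$ and $a\alpha_1^{-1}\cdots\alpha_{n-1}^{-1}$ to $a^n/(\beta_1\cdots\beta_{n-1})$, so the inner sum becomes exactly $K_{n-1}(\lambda;a^n)$.

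It remains to show $K_{n-1}(\lambda;a^n)=K_{n-1}(\lambda;a)$. Since $q=2^r$, the trace is Frobenius-invariant, giving $\lambda(x^2)=\lambda(x)$ for every $x\in\mathbb{F}_q$. Substituting $\gamma_i\mapsto\gamma_i^2$ (a bijection of $\mathbb{F}_q^*$) inside $K_{n-1}(\lambda;b)$ and using this identity together with additivity of $\lambda$ and the existence of unique square roots in characteristic two yields $K_{n-1}(\lambda;b)=K_{n-1}(\lambda;b^{1/2})$; iterating $s$ times with $b=a^n=a^{2^s}$ produces the desired equality. Equivalently, one may substitute $\beta_i\mapsto\beta_i^n$ directly inside $K_{n-1}(\lambda;a^n)$—a bijection of $\mathbb{F}_q^*$ by Lemma~\ref{J}—and invoke $\lambda(y^n)=\lambda(y)$, which again follows from $\lambda(x^2)=\lambda(x)$ and $n=2^s$.

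The only mild subtlety is this last identification: the natural rescaling in the previous step produces the parameter $a^n$ rather than $a$, and matching them requires both the characteristic-two Frobenius and the hypothesis that $n$ is a power of two. Lemma~\ref{J} plays the supporting role of ensuring that the dual substitution $\beta\mapsto\beta^n$ is a legitimate bijection on $\mathbb{F}_q^*$ whenever one prefers that route.
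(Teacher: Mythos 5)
Your proposal is correct and follows essentially the same route as the paper: expand the weight via $(-1)^b=1-2b$, rescale $\alpha_i\mapsto\alpha_i/a$ to land on $K_{n-1}(\lambda;a^n)$, and then remove the $n$-th power using the substitution $\beta\mapsto\beta^n$ (Lemma~\ref{J}), the characteristic-two identity $(x+y)^n=x^n+y^n$, and the Frobenius-invariance of the trace. Your alternative iterated-squaring derivation of $K_{n-1}(\lambda;b)=K_{n-1}(\lambda;b^{1/2})$ is just an unrolled version of the same step.
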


\begin{proof}
\begin{equation*}
\begin{split}
w(c(a))&={\frac{1}{2}}\sum_{\alpha_{1}^{},\cdots,\alpha_{n-1}^{}
\in\mathbb{F}_{q}^{*}}(1-(-1)^{tr(a(\alpha_{1}^{}+\cdots+
\alpha_{n-1}^{}+\alpha_{1}^{-1}\cdots\alpha_{n-1}^{-1}))})\\
&={\frac{1}{2}}\{N_{1}-\sum_{\alpha_{1}^{},\cdots,\alpha_{n-1}^{}
\in\mathbb{F}_{q}^{*}}\lambda(a(\alpha_{1}^{}+\cdots+\alpha_{n-1}^{}
+\alpha_{1}^{-1}\cdots\alpha_{n-1}^{-1}))\}\\
&={\frac{N_{1}}{2}}-{\frac{1}{2}}\sum_{\alpha_{1}^{},\cdots,
\alpha_{n-1}^{}\in\mathbb{F}_{q}^{*}}\lambda(\alpha_{1}^{}
+\cdots+\alpha_{n-1}^{}+a^{n}\alpha_{1}^{-1}\cdots\alpha_{n-1}^{-1})\\
&={\frac{N_{1}}{2}}-{\frac{1}{2}}\sum_{\alpha_{1}^{},
\cdots,\alpha_{n-1}^{}\in\mathbb{F}_{q}^{*}}\lambda(\alpha_{1}^{n}
+\cdots+\alpha_{n-1}^{n}+a^{n}\alpha_{1}^{-n}\cdots\alpha_{n-1}^{-n})\\
&\qquad\qquad(by~ Lemma~ \ref{J})\\
&={\frac{N_{1}}{2}}-{\frac{1}{2}}\sum_{\alpha_{1}^{},
\cdots,\alpha_{n-1}^{}\in\mathbb{F}_{q}^{*}}\lambda((\alpha_{1}^{}
+\cdots+\alpha_{n-1}^{}+a^{}\alpha_{1}^{-1}\cdots\alpha_{n-1}^{-1})^{n})\\
&={\frac{N_{1}}{2}}-{\frac{1}{2}}\sum_{\alpha_{1}^{},
\cdots,\alpha_{n-1}^{}\in\mathbb{F}_{q}^{*}}\lambda(\alpha_{1}^{}
+\cdots+\alpha_{n-1}^{}+a^{}\alpha_{1}^{-1}\cdots\alpha_{n-1}^{-1})\\
\end{split}
\end{equation*}
~\qquad \qquad \qquad \qquad \qquad({\cite{LN1}}, Theorem 2.23(v))
\begin{align*}
={\frac{N_{1}}{2}}-{\frac{1}{2}}K_{n-1}(\lambda;a).\qquad \qquad
\qquad \qquad \qquad \qquad \qquad
\end{align*}
\end{proof}

Denote for the moment $v_{n-1}$ in (\ref{o}) by
$v_{n-1}=(g_{1},g_{2},\cdots,g_{N_{1}})$. Let
$u=(u_{1},\cdots,u_{N_{1}})\in\mathbb{F}_{2}^{N_{1}}$, with
$\nu_{\beta}$ $1$'s in the coordinate places where $g_{l}=\beta$,
for each $\beta\in\mathbb{F}_{q}^{}$. Then we see from the
definition of the code ${C}_{n-1}$ (cf. (\ref{r})) that $u$ is a
codeword with weight $j$ if and only if
$\sum_{\beta\in\mathbb{F}_{q}}\nu_{\beta}=j$ and
$\sum_{\beta\in\mathbb{F}_{q}}\nu_{\beta}\beta=0$ (an identity in
$\mathbb{F}_{q}$). As there are
$\prod_{\beta\in\mathbb{F}_{q}}{\binom{\delta(n-1,q;\beta)}{\nu_{\beta}}}$
(cf. Proposition \ref{E}) many such codewords with weight $j$, we
obtain the following result.

\begin{proposition}\label{L}
Let $\{{C}_{n-1,j}\}_{j=0}^{N_{1}}$ be the weight distribution of
${C}_{n-1}$, where ${C}_{n-1,j}$ denotes the frequency of the
codewords with weight $j$ in ${C}_{n-1}$. Then

\begin{align*}
{C}_{n-1,j}=\sum\prod_{\beta\in\mathbb{F}_q}{\binom{\delta(n-1,q;\beta)}
{\nu_{\beta}}},
\end{align*}
where the sum runs over all the sets of integers
$\{\nu_{\beta}\}_{\beta\in\mathbb{F}_{q}}~(0\leq\nu_{\beta}\leq\delta(n-1,q;\beta))$
satisfying
\begin{align*}
\sum_{\beta\in\mathbb{F}_{q}}\nu_{\beta}=j,~
and~\sum_{\beta\in\mathbb{F}_{q}}\nu_{\beta}\beta=0.
\end{align*}

\end{proposition}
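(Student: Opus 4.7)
The plan is to prove Proposition \ref{L} by a direct counting argument, organizing the coordinate positions of a codeword according to the values appearing in $v_{n-1}$; the author has already sketched this argument in the paragraph immediately preceding the statement, so my proposal simply fleshes it out carefully.

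First I would fix the ordering $v_{n-1}=(g_{1},g_{2},\ldots,g_{N_{1}})$ chosen once and for all, and partition the index set $\{1,2,\ldots,N_{1}\}$ into fibers
\[
F_{\beta}=\{\,l\in\{1,\ldots,N_{1}\}\mid g_{l}=\beta\,\},\qquad \beta\in\mathbb{F}_{q}.
\]
By Proposition \ref{E}, the fibers are disjoint with $|F_{\beta}|=\delta(n-1,q;\beta)$, and their union is all of $\{1,\ldots,N_{1}\}$.

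Next, given $u=(u_{1},\ldots,u_{N_{1}})\in\mathbb{F}_{2}^{N_{1}}$, set $\nu_{\beta}(u)=|\{l\in F_{\beta}\mid u_{l}=1\}|$. Since every coordinate of $u$ is either $0$ or $1$, the Hamming weight of $u$ is
\[
w(u)=\sum_{\beta\in\mathbb{F}_{q}}\nu_{\beta}(u),
\]
and, viewing $\mathbb{F}_{2}\subset\mathbb{F}_{q}$, the inner product computed in $\mathbb{F}_{q}^{N_{1}}$ (as in the definition of $C_{n-1}$ in (\ref{r})) becomes
\[
u\cdot v_{n-1}=\sum_{l=1}^{N_{1}}u_{l}g_{l}=\sum_{\beta\in\mathbb{F}_{q}}\nu_{\beta}(u)\,\beta.
\]
Thus $u$ is a codeword of weight $j$ in $C_{n-1}$ if and only if the nonnegative integers $\nu_{\beta}=\nu_{\beta}(u)\leq\delta(n-1,q;\beta)$ satisfy the two conditions
\[
\sum_{\beta\in\mathbb{F}_{q}}\nu_{\beta}=j\qquad\text{and}\qquad\sum_{\beta\in\mathbb{F}_{q}}\nu_{\beta}\beta=0.
\]

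Finally I would count: for any admissible tuple $(\nu_{\beta})_{\beta\in\mathbb{F}_{q}}$, the number of vectors $u\in\mathbb{F}_{2}^{N_{1}}$ with $\nu_{\beta}(u)=\nu_{\beta}$ for every $\beta$ is exactly $\prod_{\beta\in\mathbb{F}_{q}}\binom{\delta(n-1,q;\beta)}{\nu_{\beta}}$, since one independently selects the $\nu_{\beta}$ positions in $F_{\beta}$ where $u_{l}=1$. Summing over all admissible tuples $(\nu_{\beta})$ yields the formula for $C_{n-1,j}$.

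There is essentially no obstacle: the argument is a routine organization of the weight-counting according to the fibers of $v_{n-1}$, and the evaluation of the fiber sizes $\delta(n-1,q;\beta)$ has already been provided by Proposition \ref{E}. The only point requiring a small care is to remember that the inner product in (\ref{r}) is taken in $\mathbb{F}_{q}^{N_{1}}$ rather than $\mathbb{F}_{2}^{N_{1}}$, so that the second constraint $\sum_{\beta}\nu_{\beta}\beta=0$ is an equation in $\mathbb{F}_{q}$ and not in $\mathbb{Z}$ or $\mathbb{F}_{2}$.
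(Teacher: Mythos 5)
Your proposal is correct and is essentially the same argument the paper gives in the paragraph immediately preceding Proposition \ref{L}: partition the coordinates of $v_{n-1}$ into the fibers over each $\beta\in\mathbb{F}_{q}$, translate the weight and the condition $u\cdot v_{n-1}=0$ into the two constraints on the $\nu_{\beta}$, and count codewords with prescribed $\nu_{\beta}$ by the product of binomial coefficients. You merely spell out the details (explicit fibers $F_{\beta}$, the remark that the second constraint is an identity in $\mathbb{F}_{q}$) that the paper leaves implicit.
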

\indent

\begin{corollary}\label{M}

$(1)$ Let $\{{C}_{1,j}\}_{j=0}^{q-1}$ be the weight distribution of
${C}_{1}$. Then

\begin{align}\label{x}
{C}_{1,j}=\sum{\binom{1}{\nu_{0}}}\prod_{tr(\beta^{-1})=0}{\binom{2}{\nu_{\beta}}}~(j=0,\cdots,q-1),
\end{align}
where the sum is over all the sets of nonnegative integers
$\{\nu_{0}\}\cup\{\nu_{\beta}\}_{tr(\beta^{-1})=0}$ satisfying
$\nu_{0}+\sum_{tr(\beta^{-1})=0}\nu_{\beta}=j$ and
$\sum_{tr(\beta^{-1})=0}\nu_{\beta}\beta=0$ (cf.(\ref{p})).\\

$(2)$ Let $\{{C}_{3,j}\}_{j=0}^{(q-1)^{3}}$ be the weight
distribution of ${C}_{3}$. Then

\begin{align}\label{y}
{C}_{3,j}=\sum{\binom{m_{0}}{\nu_{0}}}\prod_{\substack{|t|<2\sqrt{q}\\
t\equiv-1(4)}}\prod_{K(\lambda;\beta^{-1})=t}{\binom{m_{t}}{\nu_{\beta}}},
\end{align}
where the sum runs over all the sets of integers
$\{\nu_{\beta}\}_{\beta\in\mathbb{F}_{q}}$ satisfying

\begin{align*}
\sum_{\beta\in\mathbb{F}_{q}}\nu_{\beta}=j,~and~\sum_{\beta\in\mathbb{F}_{q}}\nu_{\beta}\beta=0,
\end{align*}

\begin{align*}
m_{0}=q^{2}-3q+3,
\end{align*}
and
\begin{align*}
m_{t}=t^{2}+q^{2}-4q+3,
\end{align*}
for every integer $t$ satisfying $|t|<2\sqrt{q}$ and $t\equiv-1(4)$
(cf. Theorem \ref{C}, (\ref{q})).\\

\end{corollary}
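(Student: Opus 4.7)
The plan is to obtain both parts by specializing Proposition \ref{L} to $n=2$ and $n=4$, substituting the explicit formulas for $\delta(n-1,q;\beta)$ from Corollary \ref{F}, and then simplifying the resulting product using elementary observations about binomial coefficients and (for part (2)) Theorem \ref{C}.

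For part (1), I would take $n=2$ in Proposition \ref{L}, so that $N_1 = q-1$ and the weight distribution reads
\begin{align*}
C_{1,j} = \sum \prod_{\beta \in \mathbb{F}_q} \binom{\delta(1,q;\beta)}{\nu_\beta},
\end{align*}
the sum ranging over $\{\nu_\beta\}$ satisfying \eqref{e}. Corollary \ref{F}(1) tells us $\delta(1,q;\beta)$ is $1$, $2$, or $0$, respectively, when $\beta=0$, when $tr(\beta^{-1})=0$, and when $tr(\beta^{-1})=1$. Since $\binom{0}{\nu_\beta}$ vanishes unless $\nu_\beta=0$, the only $\beta$ contributing nontrivially are $\beta=0$ and those with $tr(\beta^{-1})=0$; hence the product collapses to exactly \eqref{x}, with the side constraints in \eqref{e} becoming $\nu_0 + \sum_{tr(\beta^{-1})=0}\nu_\beta = j$ and $\sum_{tr(\beta^{-1})=0}\nu_\beta\beta = 0$ (note that the $\beta=0$ term contributes nothing to the second sum).

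For part (2), I would take $n=4$ in Proposition \ref{L}, so $N_1 = (q-1)^3$, giving
\begin{align*}
C_{3,j} = \sum \prod_{\beta \in \mathbb{F}_q} \binom{\delta(3,q;\beta)}{\nu_\beta}.
\end{align*}
By Corollary \ref{F}(2), the factor at $\beta=0$ is $\binom{m_0}{\nu_0}$ with $m_0 = q^2-3q+3$, while for $\beta \in \mathbb{F}_q^*$ the factor is $\binom{K(\lambda;\beta^{-1})^2 + q^2 - 4q + 3}{\nu_\beta}$. By Theorem \ref{C}, as $\beta$ ranges over $\mathbb{F}_q^*$ the value $K(\lambda;\beta^{-1})$ takes exactly the integers $t$ with $|t|<2\sqrt q$ and $t \equiv -1 \pmod 4$. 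Grouping the product over $\mathbb{F}_q^*$ first by the value $t$ of $K(\lambda;\beta^{-1})$ and then over the $\beta$'s giving that value, one recognizes each inner factor as $\binom{m_t}{\nu_\beta}$ with $m_t = t^2 + q^2 - 4q + 3$, which is precisely \eqref{y}.

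The main obstacle in this proof is essentially just bookkeeping: verifying that no $\nu_\beta$ with $\delta(n-1,q;\beta) = 0$ can appear in a nonzero term (handled by the convention that $\binom{0}{\nu} = 0$ for $\nu \ge 1$), and that regrouping the product over $\mathbb{F}_q^*$ by Kloosterman values is legitimate. Both points are routine. The side conditions \eqref{e} carry over unchanged; in particular, in (1) the element $\beta=0$ contributes only to the weight constraint and not to the vanishing-sum condition, which is why the latter involves only the $\nu_\beta$ with $tr(\beta^{-1})=0$.
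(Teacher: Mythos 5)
Your proposal is correct and is exactly the paper's (implicit) argument: Corollary~\ref{M} is obtained by specializing Proposition~\ref{L} to $n=2$ and $n=4$, substituting the values of $\delta(1,q;\beta)$ and $\delta(3,q;\beta)$ from Corollary~\ref{F}, and, for part (2), regrouping the product over $\mathbb{F}_q^*$ according to the Kloosterman values $t$ described in Theorem~\ref{C}. Your handling of the vanishing factors $\binom{0}{\nu_\beta}$ in part (1) and of the side conditions \eqref{e} matches what the paper intends.
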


\begin{remark}\label{N}
This shows that the weight distribution of ${C}_{1}$ is the
same as that of ${C}(SO^{+}(2,q))$ (cf. \cite{D3}).\\
\end{remark}

From now on, we will assume that $r\geq3$, and hence every codeword
in ${C}_{n-1}^{\bot}$ can be written as $c(a)$, for a unique
$a\in\mathbb{F}_{q}$ (cf. Proposition \ref{H}).\\

We now apply the Pless power moment identity in (\ref{u}) to
${C}_{n-1}^{\bot}$,  in order to obtain the result in Theorem
\ref{A} (1) about a recursive formula. Then the left hand side of
that identity in (\ref{u}) is equal to

\begin{align}\label{z}
\sum_{a\in\mathbb{F}_{q}^{*}}w(c(a))^{h},
\end{align}
with $w(c(a))$ given by (\ref{v}). So (\ref{z}) is

\begin{align}\label{a1}
\begin{split}
\sum_{a\in\mathbb{F}_{q}^{*}}w(c(a))^{h}&={\frac{1}{2^{h}}}\sum_{a\in
\mathbb{F}_{q}^{*}}(N_{1}-K_{n-1}(\lambda;a))^{h}\\
&={\frac{1}{2^{h}}}\sum_{a\in\mathbb{F}_{q}^{*}}\sum_{l=0}^{h}(-1)^{l}
{\binom{h}{l}}N_{1}^{h-1}K_{n-1}(\lambda;a)^{l}\\
&={\frac{1}{2^{h}}}\sum_{l=0}^{h}(-1)^{l}{\binom{h}{l}}N_{1}^{h-1}MK_{n-1}^{l}.
\end{split}
\end{align}

On the other hand, noting that $dim_{\mathbb{F}_{2}}{C}_{n-1}=r$
(cf. Proposition \ref{H}) the right hand side of the Pless moment
identity(cf. (\ref{u})) becomes

\begin{align}\label{b1}
q\sum_{j=0}^{min\{N_{1},h\}}(-1)^{j}{C}_{n-1,j}\sum_{t=j}^{h}
t!S(h,t)2^{-t}{\binom{N_{1}-j}{N_{1}-t}}.
\end{align}

Our result in (\ref{b}) follows now by equating (\ref{a1}) and
(\ref{b1}).\\

\begin{remark}\label{O}
A recursive formula for the power moments of multi-dimensional
Kloosterman sums was obtained in \cite{D1} by constructing binary
linear codes  ${C}(SL(n,q))$ and utilizing explicit expressions of
Gauss sums for the finite special linear group $SL(n,q)$. However,
our result in (\ref{b}) is better than that in (1) of \cite{D1}.
Because our formula here is much simpler than the one there. Indeed,
the length of the code ${C}_{n-1}$ here is $N_{1}=(q-1)^{n-1}$,
whereas that of ${C}(SL(n,q))$ there is
$N=q^{\binom{n}{2}}\prod_{j=2}^{n}(q^{j}-1)$, both of which
appear in their respective expressions of  recursive formulas.\\
\end{remark}

\section{Construction of codes associated with powers of\\ Kloosterman
sums}

We will construct binary linear codes $D_{m}$ of length
$N_{2}=(q-1)^{m}$, connected with the $m$-th powers of (the
ordinary) Kloosterman sums. Here $m\in \mathbb{Z}_{>0}$.

Let
\begin{align}\label{c1}
w_{m}=(\cdots,\alpha_{1}^{}+\cdots+\alpha_{m}^{}+\alpha_{1}^{-1}+\cdots
+\alpha_{m}^{-1},\cdots),
\end{align}
where $\alpha_{1}^{},\alpha_{2}^{},\cdots,\alpha_{m}^{}$ run
respectively over all elements of $\mathbb{F}_{q}^{*}$. Here we do
not specify the ordering of the components of $w_{m}$, but we assume
that some ordering is fixed.

\begin{theorem}\label{P}$($\cite{D3}$)$
Let $\lambda$ be the canonical additive character of
$~\mathbb{F}_q$, and let $\beta\in\mathbb{F}_{q}^{*}$. Then

\begin{align}\label{d1}
\sum_{\alpha\in\mathbb{F}_{q}-\{0,1\}}\lambda({\frac{\beta}{\alpha^{2}+\alpha}})=K(\lambda;\beta)-1.
\end{align}\\

\end{theorem}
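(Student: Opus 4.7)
The plan is to find a rational change of variable that converts the sum on the left into a Kloosterman-like sum. The most natural candidate is the involution $\alpha = 1/(1+\gamma)$, which is a bijection of $\mathbb{F}_q - \{0,1\}$ onto itself (indeed, $\alpha = 0$ has no preimage, while $\alpha = 1$ corresponds to $\gamma = 0$). Under this substitution one computes $\alpha+1 = \gamma/(1+\gamma)$, hence $\alpha^2+\alpha = \alpha(\alpha+1) = \gamma/(1+\gamma)^2$, and therefore
\begin{equation*}
\frac{\beta}{\alpha^2+\alpha} = \frac{\beta(1+\gamma)^2}{\gamma} = \frac{\beta(1+\gamma^2)}{\gamma} = \beta\gamma + \frac{\beta}{\gamma},
\end{equation*}
using $(1+\gamma)^2 = 1+\gamma^2$ in characteristic two. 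So the left-hand side becomes $\sum_{\gamma \in \mathbb{F}_q - \{0,1\}} \lambda(\beta\gamma + \beta/\gamma)$.

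Next, I would reintroduce the missing term $\gamma = 1$: there $\beta\gamma + \beta/\gamma = \beta+\beta = 0$, so $\lambda$ of it equals $1$. Hence
\begin{equation*}
\sum_{\gamma \in \mathbb{F}_q - \{0,1\}} \lambda\Bigl(\beta\gamma + \frac{\beta}{\gamma}\Bigr) = \sum_{\gamma \in \mathbb{F}_q^*} \lambda\Bigl(\beta\gamma + \frac{\beta}{\gamma}\Bigr) - 1.
\end{equation*}
The multiplicative substitution $\gamma \mapsto \gamma/\beta$ on $\mathbb{F}_q^*$ then identifies this remaining sum with $K(\lambda;\beta^2)$.

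Finally, I would invoke the characteristic-two identity $K(\lambda;\beta^2) = K(\lambda;\beta)$, which follows from $\lambda(x^2) = \lambda(x)$ (since $\mathrm{tr}$ is fixed by Frobenius) together with $x^2 + \beta^2/x^2 = (x + \beta/x)^2$ and the bijectivity of squaring on $\mathbb{F}_q^*$. Combining these steps yields the asserted equality $K(\lambda;\beta)-1$.

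The only potentially delicate point is verifying that $\alpha = 1/(1+\gamma)$ really gives a bijection of $\mathbb{F}_q-\{0,1\}$ to itself and that the rational identity for $\beta/(\alpha^2+\alpha)$ is valid at every surviving value; both are elementary, so I do not expect a substantive obstacle. The whole argument is essentially a two-step change of variables exploiting characteristic two.
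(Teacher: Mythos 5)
Your argument is correct and complete. The paper itself gives no proof of Theorem \ref{P} --- it is quoted from the external reference \cite{D3} --- so there is nothing in-text to compare against; your derivation serves as a valid self-contained proof. Each step checks out: $\gamma\mapsto 1/(1+\gamma)$ does permute $\mathbb{F}_q-\{0,1\}$ (its inverse is $\alpha\mapsto(1+\alpha)/\alpha$); the computation $\beta/(\alpha^2+\alpha)=\beta\gamma+\beta\gamma^{-1}$ via $(1+\gamma)^2=1+\gamma^2$ is right; the reinstated term at $\gamma=1$ contributes $\lambda(0)=1$; the scaling $\gamma\mapsto\gamma/\beta$ gives $K(\lambda;\beta^2)$; and $K(\lambda;\beta^2)=K(\lambda;\beta)$ follows from $\mathrm{tr}(x^2)=\mathrm{tr}(x)$ and the bijectivity of squaring, exactly as the paper itself uses in the proof of Lemma \ref{K}. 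One cosmetic slip: the map $\gamma\mapsto 1/(1+\gamma)$ is not an involution (it has order three on $\mathbb{F}_q-\{0,1\}$), but only its bijectivity is used, so nothing is affected.
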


\begin{proposition}\label{R}

For each $\beta\in\mathbb{F}_{q}$, let
\begin{align*}
\sigma(m,q;\beta)=|\{(\alpha_{1}^{},\cdots,\alpha_{m}^{})\in(\mathbb{F}_{q}^{*})^{m}|\alpha_{1}^{}+\cdots+\alpha_{m}^{}+\alpha_{1}^{-1}+\cdots+\alpha_{m}^{-1}=\beta\}|
\end{align*}
(Note that $\sigma(m,q;\beta)$ is the number of components with
those equal to $\beta$ in the vector $w_{m}$ (cf. (\ref{c1})). Then

$(1)$
\begin{align}\label{e1}
\sigma(m,q;\beta)=\sum\lambda(\alpha_{1}^{}+\cdots+\alpha_{m}^{})+q^{-1}
\{(q-1)^{m}+(-1)^{m+1}\},
\end{align}
where the sum in (\ref{e1}) runs over all
$\alpha_{1}^{},\cdots,\alpha_{m}^{}\in\mathbb{F}_{q}^{*}$,
satisfying $\alpha_{1}^{-1}+\cdots+\alpha_{m}^{-1}=\beta$.\\

$(2)$
\begin{align}\label{f1}
\sigma(2,q;\beta)=
\begin{cases}
2q-3, & \text{if ~$\beta=0$},\\
K(\lambda;\beta^{-1})+q-3, & \text{if ~$\beta\neq0$}.
\end{cases}
\end{align}

\end{proposition}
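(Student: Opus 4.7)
My plan for part (1) is to apply the additive character orthogonality identity, which rewrites the indicator of $y=0$ as $q^{-1}\sum_{a\in\mathbb{F}_q}\lambda(ay)$, to both $\sigma(m,q;\beta)$ and to the right-hand character sum, which I denote $T(m,q;\beta):=\sum\lambda(\alpha_1+\cdots+\alpha_m)$ (the sum over $(\alpha_i)\in(\mathbb{F}_q^*)^m$ satisfying $\alpha_1^{-1}+\cdots+\alpha_m^{-1}=\beta$). Expanding the constraint $\sum\alpha_i^{-1}=\beta$ in $T(m,q;\beta)$, interchanging summations, and factoring over coordinates yields
\[
T(m,q;\beta)=\frac{(-1)^m}{q}+\frac{1}{q}\sum_{a\in\mathbb{F}_q^*}\lambda(a\beta)\,K(\lambda;a)^m,
\]
where the $a=0$ contribution uses $\sum_{\alpha\in\mathbb{F}_q^*}\lambda(\alpha)=-1$. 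Expanding the indicator of $\sum\alpha_i+\sum\alpha_i^{-1}=\beta$ in $\sigma(m,q;\beta)$ in the same way, and using $\alpha\mapsto a^{-1}\alpha$ to normalize the single-variable sum, gives
\[
\sigma(m,q;\beta)=\frac{(q-1)^m}{q}+\frac{1}{q}\sum_{a\in\mathbb{F}_q^*}\lambda(a\beta)\,K(\lambda;a^2)^m.
\]

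The crucial ingredient linking these two expressions is the identity $K(\lambda;a^2)=K(\lambda;a)$ in characteristic two, which follows at once from Frobenius invariance of the trace ($\lambda(x^2)=\lambda(x)$) together with the squaring bijection on $\mathbb{F}_q^*$. Once this is invoked, the Kloosterman-sum parts cancel upon subtraction and one obtains
\[
\sigma(m,q;\beta)-T(m,q;\beta)=\frac{(q-1)^m-(-1)^m}{q}=\frac{(q-1)^m+(-1)^{m+1}}{q},
\]
which is precisely (\ref{e1}).

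For part (2), I specialize to $m=2$ and evaluate $T(2,q;\beta)$ by direct computation. When $\beta=0$, the constraint $\alpha_1^{-1}=\alpha_2^{-1}$ forces $\alpha_1=\alpha_2$, so every summand is $\lambda(0)=1$, giving $T(2,q;0)=q-1$; combining with $q^{-1}((q-1)^2-1)=q-2$ yields $\sigma(2,q;0)=2q-3$. When $\beta\neq 0$, I would solve $\alpha_2^{-1}=\beta+\alpha_1^{-1}$ for $\alpha_2=\alpha_1/(\beta\alpha_1+1)$ (valid for $\alpha_1\in\mathbb{F}_q\setminus\{0,\beta^{-1}\}$) and compute in characteristic two that $\alpha_1+\alpha_2=\beta\alpha_1^2/(\beta\alpha_1+1)$. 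The main obstacle here is coaxing this expression into the form handled by Theorem \ref{P}; the right move is the substitution $v=1/(\beta\alpha_1)$, a bijection from $\mathbb{F}_q\setminus\{0,\beta^{-1}\}$ onto $\mathbb{F}_q\setminus\{0,1\}$ under which a short calculation gives $\alpha_1+\alpha_2=\beta^{-1}/(v^2+v)$. Applying Theorem \ref{P} with $\beta$ replaced by $\beta^{-1}$ then yields $T(2,q;\beta)=K(\lambda;\beta^{-1})-1$, and adding the constant $q-2$ from part (1) produces $K(\lambda;\beta^{-1})+q-3$, as claimed.
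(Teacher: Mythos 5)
Your proof is correct and takes essentially the same route as the paper: in part (2) your single substitution $v=(\beta\alpha_{1})^{-1}$ is exactly the composition of the paper's two substitutions $\alpha_{1}\mapsto\alpha_{1}^{-1}$ and $\alpha_{1}\mapsto\beta\alpha_{1}$, arriving at Theorem \ref{P} in the same way, and your $\beta=0$ count agrees. For part (1) the paper only cites \cite{D1}, Proposition 11; your orthogonality computation, including the characteristic-two identity $K(\lambda;a^{2})=K(\lambda;a)$ needed to cancel the Kloosterman parts, correctly supplies the details the paper leaves to the reader.
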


\begin{proof}

(1) can be proved just as Proposition \ref{E}(cf. \cite{D1},
Proposition 11). The details are left to the reader.

(2) If $m=2$, from (\ref{e1})

\begin{align}\label{g1}
\sigma(2,q;\beta)=\sum\lambda(\alpha_{1}+\alpha_{2})+q-2,
\end{align}
where $\alpha_{1}$ and $\alpha_{2}$ run over all elements in
$\mathbb{F}_{q}^{*}$, satisfying
$\alpha_{1}^{-1}+\alpha_{2}^{-1}=\beta$.

\noindent
If $\beta=0$, then the result is clear. Assume now that
$\beta\neq0$. Then the sum in (\ref{g1}) is

\begin{align*}
\begin{split}
&\sum_{\alpha_{1}\in\mathbb{F}_{q}-\{0,\beta^{-1}\}}\lambda(\alpha_{1}^{}
+(\alpha_{1}^{-1}+\beta)^{-1})\\
&\qquad=\sum_{\alpha_{1}\in\mathbb{F}_{q}-\{0,\beta^{}\}}\lambda(\alpha_{1}^{-1}
+(\alpha_{1}^{}+\beta)^{-1}) \quad(\alpha_{1}\rightarrow\alpha_{1}^{-1})\\
&\qquad=\sum_{\alpha_{1}\in\mathbb{F}_{q}-\{0,1\}}
\lambda({\frac{\beta^{-1}}{\alpha_{1}^{2}+\alpha_{1}}})
\quad(\alpha_{1}\rightarrow\beta\alpha_{1}^{})\\
&\qquad=K(\lambda;\beta^{-1})-1~(cf. (\ref{d1})).
\end{split}
\end{align*}
\end{proof}

The binary linear code $D_{m}$ is defined as\\
\begin{align*}
D_{m}=\{u\in\mathbb{F}_{2}^{N_{2}}|u\cdot w_{m}=0\},
\end{align*}\\
where the dot denotes the usual inner product in
$\mathbb{F}_q^{N_2}$.

\begin{remark}\label{Q}
Clearly, the binary linear codes ${C}_{1}$ and $D_{1}$
coincide.\\
\end{remark}

In view of Theorem \ref{G}, the dual $D_{m}^{\bot}$ of $D_{m}$ is
given by
\begin{align}\label{h1}
D_{m}^{\bot}=\{d(a)=(\cdots,tr(a(\alpha_{1}^{}+\cdots+\alpha_{m}^{}
+\alpha_{1}^{-1}+\cdots+\alpha_{m}^{-1})),\cdots)|a\in\mathbb{F}_{q}\}.
\end{align}
\indent

\begin{lemma}\label{R}
$(q-1)^{m}>2^{m}q^{\frac{m}{2}}$, for all $m\in \mathbb{Z}_{>0}$ and
$q=2^{r}\geq8$.
\end{lemma}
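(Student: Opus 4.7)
The plan is to reduce the inequality to the $m=1$ case. Since both sides are positive, the inequality $(q-1)^m > 2^m q^{m/2}$ is equivalent to $\left(\frac{q-1}{2\sqrt{q}}\right)^m > 1$, which in turn is equivalent to the base inequality
\[
q - 1 > 2\sqrt{q}.
\]
So it suffices to establish this one inequality for $q = 2^r \geq 8$, and then raise both sides to the $m$-th power (justified because $q-1 > 0$ and $2\sqrt{q} > 0$, so the map $x \mapsto x^m$ is strictly increasing on positive reals).

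To verify $q - 1 > 2\sqrt{q}$ for $q \geq 8$, I would square both positive quantities: this is equivalent to $(q-1)^2 > 4q$, i.e., $q^2 - 6q + 1 > 0$. Writing $q^2 - 6q + 1 = q(q - 6) + 1$, I note that for $q \geq 8$ we have $q(q-6) \geq 8 \cdot 2 = 16$, so $q^2 - 6q + 1 \geq 17 > 0$, as required. (Alternatively, one could observe that the roots of $x^2 - 6x + 1$ are $3 \pm 2\sqrt{2} \approx 5.83$ and $0.17$, so the quadratic is positive for all $q \geq 6$, and in particular for all admissible $q = 2^r \geq 8$.)

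There is no real obstacle here; the only mild subtlety is noting that because $q = 2^r$ and $r \geq 3$, we indeed have $q \geq 8$, so the base case is covered. An induction on $m$ would also work — the base case $m=1$ is exactly $q-1 > 2\sqrt{q}$, and the inductive step multiplies both sides of the previous inequality by $q-1 > 2\sqrt{q}$ — but the direct reduction via $m$-th powers is cleaner and matches the author's one-line ``induction on $s$'' style from Lemma~\ref{F}.
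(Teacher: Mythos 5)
Your proof is correct, and it is essentially the paper's approach fleshed out: the paper's entire proof is the one-line remark ``this can be shown, for example, by induction on $m$,'' and your direct reduction to the base inequality $q-1>2\sqrt{q}$ (verified via $q^2-6q+1>0$ for $q\geq 8$) is exactly the content that induction would carry, as you yourself note. Nothing is missing.
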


\begin{proof}
This can be shown, for example, by induction on $m$.
\end{proof}

\indent
\begin{proposition}\label{S}
For $q=2^{r}$, with $r\geq3$, the map $\mathbb{F}_{q}\rightarrow
D_{m}^{\bot}(a\mapsto d(a))$ is an $\mathbb{F}_{2}$-linear
isomorphism.
\end{proposition}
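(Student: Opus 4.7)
The plan is to mirror the proof of Proposition~\ref{H} almost verbatim, the only substantive change being that Deligne's multi-dimensional bound gets replaced by the classical Weil bound $|K(\lambda;a)|\leq 2\sqrt{q}$. The map $a\mapsto d(a)$ is clearly $\mathbb{F}_2$-linear, and surjective by the very definition (\ref{h1}) of $D_m^{\perp}$, so only injectivity requires argument.

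Suppose $a\in\mathbb{F}_q^{*}$ lies in the kernel, i.e.\ $d(a)=0$. Then $tr(a(\alpha_1+\cdots+\alpha_m+\alpha_1^{-1}+\cdots+\alpha_m^{-1}))=0$ for every $(\alpha_1,\ldots,\alpha_m)\in(\mathbb{F}_q^{*})^m$, so summing $\lambda(\cdot)=(-1)^{tr(\cdot)}=1$ over all such tuples gives
$$(q-1)^m=\sum_{\alpha_1,\ldots,\alpha_m\in\mathbb{F}_q^{*}}\lambda\bigl(a\alpha_1+\cdots+a\alpha_m+a\alpha_1^{-1}+\cdots+a\alpha_m^{-1}\bigr).$$
The decisive point, and the only real difference from Proposition~\ref{H}, is that here the exponent is \emph{additive} in the $\alpha_i$'s (contrast with the coupled product $\alpha_1^{-1}\cdots\alpha_{n-1}^{-1}$), so the right-hand side factors as
$$\left(\sum_{\alpha\in\mathbb{F}_q^{*}}\lambda(a\alpha+a\alpha^{-1})\right)^{m}.$$
The substitution $\alpha\mapsto a^{-1}\alpha$ rewrites the inner sum as $K(\lambda;a^2)$, which in turn equals $K(\lambda;a)$ by the characteristic-two identity $\lambda(x^2)=\lambda(x)$ combined with the bijectivity of $x\mapsto x^2$ on $\mathbb{F}_q^{*}$ (the same trick used inside the proof of Lemma~\ref{K}).

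Assembling the pieces yields $(q-1)^m=K(\lambda;a)^m$, and the Weil bound $|K(\lambda;a)|\leq 2\sqrt{q}$ then forces $(q-1)^m\leq 2^m q^{m/2}$, directly contradicting Lemma~\ref{R} whenever $q\geq 8$. Hence $a=0$, proving injectivity. The only conceivable obstacle is the factorization/substitution step that reduces the multiple sum to a product of ordinary Kloosterman sums, but this is dictated by the additive shape of the exponent and uses nothing beyond orthogonality of $\lambda$; no new estimates or identities are needed, so the argument is entirely routine once this observation is in place.
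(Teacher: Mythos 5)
Your proof is correct and follows essentially the same route as the paper: reduce to showing the kernel is trivial, identify the full character sum over $(\mathbb{F}_q^*)^m$ with $K(\lambda;a)^m$, and derive a contradiction from the Weil bound together with Lemma~\ref{R}. The paper simply asserts that the sum equals $K(\lambda;a)^m$, whereas you supply the factorization, the substitution $\alpha\mapsto a^{-1}\alpha$, and the identity $K(\lambda;a^2)=K(\lambda;a)$ explicitly; this is a welcome elaboration, not a different argument.
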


\begin{proof}

The map is clearly $\mathbb{F}_{2}$-linear and onto. Let $a$ be in
the kernel of the map. Then
$tr(a(\alpha_{1}^{}+\cdots+\alpha_{m}^{}+\alpha_{1}^{-1}+\cdots+\alpha_{m}^{-1}))=0$,
for all $\alpha_{1},\cdots,\alpha_{m}\in\mathbb{F}_{q}^{*}$. Suppose
that $a\neq0$. Then, on the one hand,

\begin{align}\label{i1}
\sum_{\alpha_{1}^{},\cdots,\alpha_{m}^{}\in\mathbb{F}_{q}^{*}}(-1)^{tr(a(\alpha_{1}^{}
+\cdots+\alpha_{m}^{}+\alpha_{1}^{-1}+\cdots+\alpha_{m}^{-1}))}=(q-1)^{m}=N_{2}.
\end{align}
On the other hand, (\ref{i1}) is equal to $K(\lambda;a)^{m}$, and so
from Weil's estimate (i.e. (\ref{a}) with $m=1$) we get
\begin{align*}
(q-1)^{m}\leq2^{m}q^{\frac{m}{2}}.
\end{align*}

But this is impossible for $q\geq8$, in view of Lemma \ref{R}
\end{proof}

\section{Recursive formulas for $m$-multiple power moments of Kloostermann sums}
\indent
 We are now ready to derive, via Pless power moment identity,  a recursive formula
  for the $m$-multiple power moments of Kloosterman sums in terms of the frequencies
  of weights in $D_{m}$.

\indent
\begin{lemma}\label{T}
For $a\in\mathbb{F}_{q}^{*}$, the Hamming weight $w(d(a))$ of $d(a)$
(cf. (\ref{h1})) can be expressed as follows:

\begin{align}\label{j1}
w(d(a))={\frac{N_{2}}{2}}-{\frac{1}{2}}K(\lambda;a)^{m},~with~
N_{2}=(q-1)^{m}.
\end{align}

\end{lemma}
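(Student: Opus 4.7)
The plan is to mimic the computation of Lemma \ref{K} (the analogous result for $w(c(a))$) but to exploit, crucially, the fact that the inner arguments $a(\alpha_1+\cdots+\alpha_m+\alpha_1^{-1}+\cdots+\alpha_m^{-1})$ \emph{split as a sum over the $m$ variables}, so that the resulting character sum \emph{factors} as a product of $m$ identical one-variable Kloosterman-type sums.

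First I would write, directly from the definition of Hamming weight and of $d(a)$ in (\ref{h1}),
\begin{equation*}
w(d(a))=\tfrac{1}{2}\sum_{\alpha_1,\ldots,\alpha_m\in\mathbb{F}_q^*}\bigl(1-\lambda(a(\alpha_1+\cdots+\alpha_m+\alpha_1^{-1}+\cdots+\alpha_m^{-1}))\bigr),
\end{equation*}
which separates into $\frac{N_2}{2}$ minus $\frac{1}{2}$ times the character sum. Because the exponent is additive in the $\alpha_i$, that character sum factors as $\bigl(\sum_{\alpha\in\mathbb{F}_q^*}\lambda(a\alpha+a\alpha^{-1})\bigr)^{m}$. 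So the whole lemma reduces to the identity
\begin{equation*}
\sum_{\alpha\in\mathbb{F}_q^*}\lambda(a\alpha+a\alpha^{-1})=K(\lambda;a).
\end{equation*}

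To establish this identity I would substitute $\alpha\mapsto\alpha^2$ (a bijection of $\mathbb{F}_q^*$ by the Frobenius, valid since $q=2^r$) to rewrite the left side as $\sum_{\alpha}\lambda(a\alpha^2+a\alpha^{-2})=\sum_{\alpha}\lambda((a^{1/2}\alpha+a^{1/2}\alpha^{-1})^2)$, where $a^{1/2}$ again exists uniquely in $\mathbb{F}_q^*$ by Frobenius. Since $\lambda(x^2)=(-1)^{tr(x^2)}=(-1)^{tr(x)}=\lambda(x)$ in characteristic $2$, this collapses to $\sum_\alpha\lambda(a^{1/2}\alpha+a^{1/2}\alpha^{-1})$. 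Alternatively, and perhaps cleaner, one substitutes $\alpha\mapsto a^{-1}\alpha$ to turn $\sum_\alpha\lambda(a\alpha+a\alpha^{-1})$ into $\sum_\alpha\lambda(\alpha+a^2\alpha^{-1})=K(\lambda;a^2)$, and then uses the Frobenius substitution $\alpha\mapsto\alpha^2$ together with $\lambda(x^2)=\lambda(x)$ to conclude $K(\lambda;a^2)=K(\lambda;a)$.

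Raising the identity to the $m$-th power then yields
\begin{equation*}
\sum_{\alpha_1,\ldots,\alpha_m\in\mathbb{F}_q^*}\lambda(a(\alpha_1+\cdots+\alpha_m+\alpha_1^{-1}+\cdots+\alpha_m^{-1}))=K(\lambda;a)^{m},
\end{equation*}
and substituting back into the expression for $w(d(a))$ gives (\ref{j1}). The only genuinely delicate step is the character-sum identity $\sum_\alpha\lambda(a\alpha+a\alpha^{-1})=K(\lambda;a)$, which relies on working in characteristic two (so that Frobenius is a bijection and $\lambda(x^2)=\lambda(x)$); everything else is bookkeeping that parallels the proof of Lemma \ref{K}.
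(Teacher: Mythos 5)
Your proof is correct and is essentially the paper's intended argument: the paper disposes of this lemma by saying it goes "exactly as the proof of Lemma \ref{K}," and your factorization of the character sum into $m$ copies of $\sum_{\alpha}\lambda(a\alpha+a\alpha^{-1})$, followed by the substitution $\alpha\mapsto a^{-1}\alpha$ and the Frobenius identity $\lambda(x^2)=\lambda(x)$ to get $K(\lambda;a^2)=K(\lambda;a)$, is precisely the $n=2$ specialization of that proof (and is the same factorization the paper uses implicitly when it asserts that (\ref{i1}) equals $K(\lambda;a)^m$ in Proposition \ref{S}).
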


\begin{proof}
This can be shown exactly as the proof of Lemma \ref{K}.
\end{proof}
\indent

\begin{corollary}\label{U}
For $m=2$,
\begin{align}\label{r1}
w(d(a))={\frac{1}{2}}(q^{2}-3q+1-K_{2}(\lambda;a)) ~(cf. (\ref{n})).
\end{align}
\end{corollary}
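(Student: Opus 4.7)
The plan is to specialize Lemma \ref{T} to $m=2$ and then rewrite the square of the ordinary Kloosterman sum using Theorem \ref{D}. First I would set $m=2$ in (\ref{j1}), so that $N_2 = (q-1)^2 = q^2 - 2q + 1$ and
\begin{align*}
w(d(a)) = \tfrac{1}{2}(q^2 - 2q + 1) - \tfrac{1}{2} K(\lambda;a)^2.
\end{align*}

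Next I would invoke Theorem \ref{D}, which gives $K(\lambda;a)^2 = K_2(\lambda;a) + q$ (solving (\ref{n}) for $K(\lambda;a)^2$). Substituting this into the previous display yields
\begin{align*}
w(d(a)) = \tfrac{1}{2}(q^2 - 2q + 1) - \tfrac{1}{2}(K_2(\lambda;a) + q) = \tfrac{1}{2}(q^2 - 3q + 1 - K_2(\lambda;a)),
\end{align*}
which is exactly (\ref{r1}).

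There is no real obstacle here: the corollary is a direct algebraic consequence of combining Lemma \ref{T} at $m=2$ with the Carlitz identity (\ref{n}). The only thing to watch is bookkeeping of the constant term, namely that $(q-1)^2 - q = q^2 - 3q + 1$.
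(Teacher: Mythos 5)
Your proposal is correct and follows exactly the route the paper intends: specialize Lemma \ref{T} to $m=2$ and substitute $K(\lambda;a)^{2}=K_{2}(\lambda;a)+q$ from Theorem \ref{D} (the reason the corollary cites (\ref{n})). The arithmetic $(q-1)^{2}-q=q^{2}-3q+1$ is right, so there is nothing to add.
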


The same argument leading to Proposition \ref{L} shows the next
proposition.

\begin{proposition}\label{V}

Let $\{D_{m,j}\}_{j=0}^{N_{2}}$ be the weight distribution of
$D_{m}$, where $D_{m,j}$ denotes the frequency of the codewords with
weight $j$ in $D_{m}$. Then

\begin{align}\label{k1}
D_{m,j}=\sum\prod_{\beta\in\mathbb{F}_{q}}{\binom{\sigma(m,q;\beta)}{\nu_{\beta}}},
\end{align}
where the sum runs over all the sets of integers
$\{\nu_{\beta}\}_{\beta\in\mathbb{F}_{q}}~(0\leq\nu_{\beta}\leq\sigma(m,q;\beta))$,
satisfying

\begin{align*}
\sum_{\beta\in\mathbb{F}_{q}}\nu_{\beta}=j,~and~\sum_{\beta\in\mathbb{F}_{q}}
\nu_{\beta}\beta=0.
\end{align*}

\end{proposition}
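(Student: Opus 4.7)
The plan is to repeat verbatim the combinatorial argument used for Proposition~\ref{L}, with $v_{n-1}$ replaced by $w_m$ and $\delta(n-1,q;\beta)$ replaced by $\sigma(m,q;\beta)$. First I would fix an (arbitrary) ordering of the components of $w_m$, writing $w_m=(g_1,\ldots,g_{N_2})$ with $g_\ell\in\mathbb{F}_q$; by the definition of $\sigma(m,q;\beta)$ recorded in Proposition~\ref{R}, the level set $\{\ell : g_\ell=\beta\}$ has cardinality exactly $\sigma(m,q;\beta)$, and as $\beta$ ranges over $\mathbb{F}_q$ these level sets partition $\{1,\ldots,N_2\}$.

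Next, given a candidate vector $u=(u_1,\ldots,u_{N_2})\in\mathbb{F}_2^{N_2}$, for each $\beta\in\mathbb{F}_q$ I let $\nu_\beta$ count the indices $\ell$ with $u_\ell=1$ and $g_\ell=\beta$; automatically $0\le\nu_\beta\le\sigma(m,q;\beta)$. The Hamming weight of $u$ is $\sum_{\beta}\nu_\beta$, while the defining inner product $u\cdot w_m$, read inside $\mathbb{F}_q$, simplifies to $\sum_{\beta}\nu_\beta\beta$ because each coordinate position with $g_\ell=\beta$ that carries a $1$ contributes the value $\beta$. Consequently $u$ lies in $D_m$ and has weight $j$ exactly when the tuple $\{\nu_\beta\}$ satisfies the two linear conditions $\sum_\beta\nu_\beta=j$ and $\sum_\beta\nu_\beta\beta=0$, i.e.\ the conditions (\ref{e}).

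Finally, for each fixed admissible profile $\{\nu_\beta\}$, the number of codewords in $D_m$ realising this profile is obtained by independently choosing $\nu_\beta$ of the $\sigma(m,q;\beta)$ positions in each level set to carry a $1$, giving $\prod_{\beta\in\mathbb{F}_q}\binom{\sigma(m,q;\beta)}{\nu_\beta}$ options. Summing over all profiles obeying the two constraints above produces the asserted formula~(\ref{k1}). No substantive obstacle arises; the only thing to observe is that distinct profiles yield disjoint families of codewords, which is immediate since the level sets $\{\ell:g_\ell=\beta\}$ are pairwise disjoint and cover all coordinate positions, so the total count $D_{m,j}$ is exactly the sum of the binomial products without double-counting.
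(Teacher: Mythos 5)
Your proposal is correct and is essentially identical to the paper's argument: the paper proves this proposition simply by remarking that "the same argument leading to Proposition~\ref{L} shows the next proposition," and your write-up is precisely that argument transplanted from $v_{n-1}$, $\delta(n-1,q;\beta)$ to $w_m$, $\sigma(m,q;\beta)$, including the identification of the weight with $\sum_\beta\nu_\beta$ and of the defining condition $u\cdot w_m=0$ with $\sum_\beta\nu_\beta\beta=0$ in $\mathbb{F}_q$. Nothing further is needed.
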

\indent

\begin{corollary}\label{W}
Let $\{D_{2,j}\}_{j=0}^{(q-1)^2}$ be the weight distribution of
$D_2$, and let $q=2^r$, with $r\geq2$. Then, in view of Theorem
\ref{C} and (\ref{f1}), we have
\begin{align}\label{l1}
\begin{split}
D_{2,j}&=\sum{\binom{2q-3}{\nu_{0}}}\prod_{\beta\in\mathbb{F}_{q}^{*}}
{\binom{K(\lambda;\beta^{-1})+q-3}{\nu_{\beta}}}\\
&=\sum{\binom{2q-3}{\nu_{0}}}\prod_{\substack{|t|<2\sqrt{q}\\t\equiv
-1(4)}}\prod_{K(\lambda;\beta^{-1})=t}{\binom{t+q-3}{\nu_{\beta}}},
\end{split}
\end{align}
where the sum runs over all the sets of nonnegative integers
$\{\nu_{\beta}\}_{\beta\in\mathbb{F}_{q}}$ satisfying

\begin{align*}
\sum_{\beta\in\mathbb{F}_{q}}\nu_{\beta}=j,~and~\sum_{\beta\in\mathbb{F}_{q}}
\nu_{\beta}\beta=0.
\end{align*}

From now on, we will assume that $r\geq3$, and hence every codeword
in $D_{m}^{\bot}$ can be written as $d(a)$, for a unique
$a\in\mathbb{F}_{q}$(cf. Proposition \ref{S}).

\end{corollary}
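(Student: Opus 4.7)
The plan is to derive Corollary \ref{W} as a direct specialization of Proposition \ref{V} to the case $m=2$, combined with the explicit count of $\sigma(2,q;\beta)$ from Proposition \ref{R}(2). Both equalities are essentially bookkeeping, so I do not expect any serious obstacle; the main task is to properly split and re-group the product over $\mathbb{F}_q$.

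First, I would invoke Proposition \ref{V} with $m=2$, which writes
\begin{align*}
D_{2,j}=\sum\prod_{\beta\in\mathbb{F}_{q}}\binom{\sigma(2,q;\beta)}{\nu_{\beta}},
\end{align*}
where the sum is over tuples $\{\nu_\beta\}_{\beta\in\mathbb{F}_q}$ satisfying $\sum_\beta \nu_\beta = j$ and $\sum_\beta \nu_\beta \beta = 0$. I would then separate the factor corresponding to $\beta=0$ from the factors corresponding to $\beta\in\mathbb{F}_q^{*}$, and plug in the formula (\ref{f1}): $\sigma(2,q;0)=2q-3$, and $\sigma(2,q;\beta)=K(\lambda;\beta^{-1})+q-3$ for $\beta\neq 0$. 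This yields the first equality of (\ref{l1}) immediately.

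For the second equality, I would regroup the product over $\mathbb{F}_q^{*}$ by the value $t=K(\lambda;\beta^{-1})$. By Theorem \ref{C}, as $\beta$ ranges over $\mathbb{F}_q^{*}$ (equivalently, as $\beta^{-1}$ does, since inversion is a bijection on $\mathbb{F}_q^{*}$), the Kloosterman sum $K(\lambda;\beta^{-1})$ takes only integer values $t$ with $|t|<2\sqrt{q}$ and $t\equiv -1\pmod 4$. Hence each factor $\binom{K(\lambda;\beta^{-1})+q-3}{\nu_\beta}$ can be rewritten as $\binom{t+q-3}{\nu_\beta}$, and one may organize the product as an outer product over the admissible values $t$ and an inner product over those $\beta\in\mathbb{F}_q^{*}$ with $K(\lambda;\beta^{-1})=t$. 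This rewriting produces the double-product form in the second line of (\ref{l1}).

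The only subtlety to watch for is that the re-indexing should not affect the constraints on $\{\nu_\beta\}$: the conditions $\sum_\beta \nu_\beta = j$ and $\sum_\beta \nu_\beta \beta = 0$ are imposed on the full family indexed by $\beta\in\mathbb{F}_q$, independently of how we group the product. Since these conditions are preserved by the re-grouping, both expressions for $D_{2,j}$ are equal as claimed. No separate verification of the constraints is needed beyond quoting Proposition \ref{V}.
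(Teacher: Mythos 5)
Your proposal is correct and follows exactly the route the paper intends: specialize Proposition \ref{V} to $m=2$, substitute the values of $\sigma(2,q;\beta)$ from (\ref{f1}), and regroup the product over $\beta\in\mathbb{F}_{q}^{*}$ according to the value $t=K(\lambda;\beta^{-1})$ using Theorem \ref{C}. Your observation that the constraints on $\{\nu_{\beta}\}$ are unaffected by the regrouping is the only point needing care, and you handle it properly.
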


 We now apply the Pless power moment identity in (\ref{u}) to $D_{m}^{\bot}$,
 in order to obtain the result in Theorem \ref{A} (1) about a recursive
 formula. Then the left hand side of that identity in (\ref{u}) is equal to

\begin{align}\label{n1}
\sum_{a\in\mathbb{F}_{q}^{*}}w(d(a))^h,
\end{align}
with $w(d(a))$ given by (\ref{j1}). So (\ref{n1}) is seen to be
equal to

\begin{align}\label{o1}
\sum_{a\in\mathbb{F}_{q}^{*}}w(d(a))^h={\frac{1}{2^{h}}}\sum_{l=0}^{h}(-1)^{l}
{\binom{h}{l}}N_{2}^{h-l}MK^{ml}.
\end{align}

On the other hand, noting that $dim_{\mathbb{F}_{2}}D_{m}=r$(cf.
Proposition \ref{S}) the right hand side of the Pless moment
identity(cf. (\ref{u})) becomes

\begin{align}\label{p1}
q\sum_{j=0}^{min\{N_{2},h\}}(-1)^{j}D_{m,j}\sum_{t=j}^{h}t!S(h,t)2^{-t}
{\binom{N_{2}-j}{N_{2}-t}}.
\end{align}
Our result in (\ref{f}) follows now by equating (\ref{o1}) and
(\ref{p1}).

\begin{remark}\label{X}

If $m=2$, from the alternative expression of $w(d(a))$ in (\ref{r1})
we see that (\ref{n1}) can also be given as

\begin{align}\label{q1}
\sum_{a\in\mathbb{F}_{q}^{*}}w(d(a))^{h}={\frac{1}{2^{h}}}\sum_{l=0}^{h}
(-1)^{l}{\binom{h}{l}}(q^{2}-3q+1)^{h-l}MK_{2}^{l}.
\end{align}

\end{remark}

\indent

\bibliographystyle{amsplain}

\end{document}